\newtheorem*{utheorem}{Theorem}
\newtheorem{theorem}{Theorem} 
\newtheorem{lemma}[theorem]{Lemma}
\newtheorem{corollary}[theorem]{Corollary}
\newtheorem{proposition}[theorem]{Proposition}
\newcommand{\la}{\lambda}
\DeclareMathOperator{\sgn}{sgn}
\DeclareMathOperator{\End}{End}
\DeclareMathOperator{\Hom}{Hom}
\DeclareMathOperator{\rad}{rad}
\DeclareMathOperator{\soc}{soc}
\DeclareMathOperator{\Top}{top}
\newcommand{\lf}{\lfloor}
\newcommand{\rf}{\rfloor}
\newcommand{\N}{\mathbf{N}}
\newcommand{\Ind}{\big\uparrow}
\newcommand{\Res}{\big\downarrow}
\newcommand{\ind}{\!\!\uparrow}
\newcommand{\res}{\!\!\downarrow}
\newcommand{\mfrac}[2]{ { \textstyle{\frac{#1}{#2}} } }
\newcommand{\PGammaL}{\mathrm{P\Gamma L}}
\newcounter{thmlistcnt}
\newenvironment{thmlist}%
	{\setcounter{thmlistcnt}{0}%
	\begin{list}{\emph{(\alph{thmlistcnt})}}{%
		\usecounter{thmlistcnt}%
		\setlength{\topsep}{0pt}%
		\setlength{\leftmargin}{0pt}%
		\setlength{\itemsep}{0pt}
		\setlength{\itemindent}{17pt}}%
	}%
	{\end{list}}%
\newcounter{romanthmlistcnt}
\newenvironment{romanthmlist}%
	{\setcounter{romanthmlistcnt}{0}%
	\begin{list}{\emph{(\roman{romanthmlistcnt})}}{%
		\usecounter{romanthmlistcnt}%
		\setlength{\topsep}{0pt}%
		\setlength{\leftmargin}{0pt}%
		\setlength{\itemsep}{0pt}
		\setlength{\itemindent}{15pt}}%
	}%
	{\end{list}}%
\newcounter{romanlistcnt}
	{\setcounter{romanlistcnt}{0}%
	\begin{list}{(\roman{romanlistcnt})}{%
		\usecounter{romanlistcnt}%
		\setlength{\topsep}{0pt}%
		\setlength{\leftmargin}{17pt}%
		\setlength{\itemsep}{0pt}
		\setlength{\itemindent}{15pt}}%
	}%
	{\end{list}}%
\begin{document}

\begin{abstract}
Building on work of Saxl, we classify 
the multiplicity-free permutation characters of all symmetric
groups of degree $66$ or more.
A corollary is
a complete list of the irreducible characters of symmetric groups
(again of degree $66$ or more)
which may appear in a multiplicity-free permutation representation.
The multiplicity-free characters
in a related family of monomial characters are also
classified. We end by investigating a consequence of these
results for Specht filtrations of permutation modules
defined over fields of prime characteristic.
\end{abstract}

\title{Multiplicity-free representations of symmetric groups}
\author{Mark Wildon}
\date{\today} 
\email{m.j.wildon@swan.ac.uk.}
\maketitle

\emph{Keywords:} multiplicity-free, symmetric group,
permutation character, monomial
character, Specht filtration

\emph{Email address:} \texttt{wildon@maths.ox.ac.uk}

\emph{Affiliation:} Mathematics Department, 
Swansea University, Singleton Park, Swansea, SA2 8PP

\emph{Address for correspondence:}
Department of Mathematics, 
University of Bristol, University Walk, Bristol, BS8 1TW.
\emph{Tel:} 07747 636959

\bigskip


In this paper we
 prove three theorems on the multiplicity-free
representations of symmetric groups. 
These theorems have  interesting consequences for the
permutation actions of symmetric groups, and for the
theory of Specht filtrations of permutation
modules, while also being of interest in their
own right.
	 
Our notation is standard.	
Let~$S_n$ denote the symmetric
group of degree~$n$, and let~$\chi^\lambda$ denote the ordinary
irreducible character of $S_n$ canonically labelled by the partition~$\lambda$
 of $n$. 
 (For an account of the character
theory of the symmetric group see Fulton \& Harris \cite[Chapter 4]{FH},
or James~\cite{James}. We shall use James' 
lecture notes as the main source for the deeper results
we need.) 
A character $\pi$ of $S_n$ is said to be \emph{multiplicity-free}
 if $\left<\pi, \chi^\lambda\right> \le 1$ for all partitions~$\lambda$ of~$n$. 
If~$\theta$ is a character of a subgroup
of~$S_n$ then we write $\theta\ind^{S_n}$ 
for the character of $S_n$ induced by~$\theta$. Dually, the arrow~$\ \res$ denotes restriction. Later we shall extend
this notation
from characters to their associated representations.
If~$H$ is a subgroup of $S_l$ then~$H \wr S_m$ is the wreath
product of $H$ with~$S_m$, acting as a subgroup of~$S_{lm}$ (as defined in~\cite[\S 1.10]{CameronPermGps}). Finally, let~$A_n$ denote the alternating group
of degree~$n$. 

Our first two theorems are motivated by a result of
Inglis, Richardson and Saxl \cite{IRS} which shows
that every irreducible character of a symmetric
group is a constituent of a multiplicity-free monomial
character.

\begin{utheorem}[Inglis, Richardson, Saxl]\label{thm:IRS}
Let $m \in \N$ and let $t$ be a fixed-point-free involution in the symmetric
group $S_{2m}$. If $n = 2m +f $ where $f \in \N_0$ then
\[ 1_{C_{S_{2m}}(t)} \times \sgn_{S_f} 
\Ind^{S_n}_{C_{S_{2m}}(t) \times S_f} 
= \sum_\lambda \chi^\lambda \]
where the
sum is over all partitions $\lambda$ of $n$ with precisely~$f$
odd parts. 
\end{utheorem}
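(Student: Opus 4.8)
The plan is to reduce the computation to two classical facts: the decomposition of the permutation character of $S_{2m}$ on perfect matchings, and the Pieri rule for tensoring with the sign character. Throughout write $C = C_{S_{2m}}(t)$, and note that $C \cong S_2 \wr S_m$ is precisely the stabiliser in $S_{2m}$ of the perfect matching of $\{1,\dots,2m\}$ whose blocks are the orbits of $t$; thus $1_C \Ind^{S_{2m}}$ is the permutation character of $S_{2m}$ on the set of perfect matchings.

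First I would establish the case $f=0$, namely
\[ 1_C \Ind^{S_{2m}} = \sum_{\mu} \chi^{2\mu}, \]
the sum running over all partitions $\mu$ of $m$, where $2\mu = (2\mu_1,2\mu_2,\dots)$; equivalently the sum is over all partitions of $2m$ with every part even. Under the characteristic isomorphism this is the plethysm identity $h_m[h_2] = \sum_\mu s_{2\mu}$ of Littlewood; alternatively it may be proved by induction on $m$ using the branching rule, or by exhibiting $(S_{2m}, C)$ as a Gelfand pair and identifying its spherical constituents. I expect this to be the main obstacle, since it is exactly here that the ``all parts even'' condition originates, and with it (after adding the sign twist) the ``$f$ odd parts'' condition of the theorem.

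Next I would obtain the general character by inducing in stages through $S_{2m} \times S_f$. As induction is transitive and commutes with the outer tensor product, and since $\sgn_{S_f}\Ind^{S_f}_{S_f} = \sgn_{S_f}$, the displayed case $f=0$ gives
\[ 1_C \times \sgn_{S_f} \Ind^{S_n}_{C\times S_f} = \sum_{\mu} \bigl( \chi^{2\mu} \times \sgn_{S_f} \bigr)\Ind^{S_n}_{S_{2m}\times S_f}. \]
Because $\sgn_{S_f} = \chi^{(1^f)}$, the dual Pieri rule --- the special case of the Littlewood--Richardson rule in which the second factor is a single column --- shows that each summand equals $\sum_\nu \chi^\nu$, the sum being over all partitions $\nu$ of $n$ whose Young diagram is obtained from that of $2\mu$ by adjoining a vertical $f$-strip, that is, $f$ boxes no two of which lie in the same row.

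Finally I would check the combinatorial identity reconciling the two indexing sets. Adjoining a vertical strip to the all-even partition $2\mu$ places at most one box in each row, so each box added converts an even row length to an odd one or creates a new row of length $1$, while every row left untouched keeps even length; hence the odd parts of $\nu$ are exactly the rows that received a box, and there are $f$ of these. Conversely, given any partition $\nu$ of $n$ with precisely $f$ odd parts, deleting one box from each odd row returns an all-even partition (a short check confirms the result is still weakly decreasing), and this is the unique vertical $f$-strip whose removal makes all parts even. This bijection shows that every $\chi^\nu$ with $\nu$ having exactly $f$ odd parts occurs with multiplicity one and that no other irreducible appears, which is the assertion of the theorem.
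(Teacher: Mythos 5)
Your argument is correct, but note that the paper does not prove this theorem at all: it is quoted verbatim as a known result and attributed to Inglis, Richardson and Saxl \cite{IRS}, with the remark that the $f=0$ case goes back to Thrall. So the comparison is really with the original source. The proof in \cite{IRS} runs along quite different lines: one shows by induction on $n$, using the branching rule and Frobenius reciprocity, that each $\chi^\lambda$ with exactly $f$ odd parts occurs at least once in the induced character, and then observes that $\sum_f \bigl|S_n : C_{S_{2m}}(t)\times S_f\bigr|$ equals $\sum_\lambda \chi^\lambda(1)$ (both count elements of order at most $2$ in $S_n$), so that all the inequalities must be equalities. Your route instead isolates the $f=0$ identity $1_{S_2\wr S_m}\Ind^{S_{2m}} = \sum_\mu \chi^{2\mu}$ as the base case and deduces the general statement from Pieri's rule together with the bijection between partitions with exactly $f$ odd parts and pairs (all-even partition, vertical $f$-strip); your verification of that bijection, including the uniqueness of the even partition obtained by deleting one node from each odd row, is sound, and the induction-in-stages step is routine. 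What your approach buys is a transparent explanation of where the ``$f$ odd parts'' condition comes from; what it costs is that the entire content is pushed into the $f=0$ case, which you cite (as the plethysm identity $h_m[h_2]=\sum_\mu s_{2\mu}$, or Thrall's theorem) rather than prove. That is a reasonable thing to take as known --- the paper itself cites \cite{Thrall} for exactly this --- but be aware that the degree-counting argument of \cite{IRS} gets the whole theorem, base case included, in one stroke.
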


Our Theorem~\ref{thm:IRSconverse} shows that conversely,
these characters 
are nearly the only
ones of their type that are multiplicity-free.

\begin{theorem}\label{thm:IRSconverse}
Let $n \geq 7$, let $k \le n$ and let $x \in S_k$ be a fixed-point-free
permutation. Let $\theta$ be a $1$-dimensional character of~$S_{n-k}$.
The monomial character
\[  \psi = \bigl( 1_{C_{S_k}(x)} \times \theta \bigr)
\Ind_{C_{S_k}(x) \times S_{n-k}}^{S_n}\]
is multiplicity-free if and only if
\emph{either}~$x$ is a $2$-cycle and $k=2$ \emph{or}~$x$ is a $3$-cycle and $k=3$ \emph{or}
$x$ is a fixed-point-free involution in $S_k$ and $\theta = \sgn_{S_{n-k}}$.
\end{theorem}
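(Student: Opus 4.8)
The plan is to pass through the chain $C_{S_k}(x)\times S_{n-k}\le S_k\times S_{n-k}\le S_n$ and induce in two stages. Let $\pi_x=1_{C_{S_k}(x)}\Ind^{S_k}_{C_{S_k}(x)}$ be the character of $S_k$ afforded by the conjugation action on the class of $x$, so that $\psi=(\pi_x\times\theta)\Ind^{S_n}_{S_k\times S_{n-k}}$. Two elementary observations drive everything. First, an outer tensor product of characters is multiplicity-free if and only if each factor is; as $\theta$ is linear this makes $\pi_x\times\theta$ multiplicity-free exactly when $\pi_x$ is. Secondly, multiplicity-freeness is inherited on restricting an induction: if $\phi\Ind^G_H$ is multiplicity-free and $H\le K\le G$ then so is $\phi\Ind^K_H$ (if $\phi\Ind^K_H$ had a constituent of multiplicity~$2$, inducing it up to $G$ would produce one in $\phi\Ind^G_H$). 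Taking $H=C_{S_k}(x)\times S_{n-k}$ and $K=S_k\times S_{n-k}$ shows that if $\psi$ is multiplicity-free then $\pi_x$ is, so I may write $\pi_x=\sum_{\mu\in D}\chi^\mu$ for a \emph{set} $D=D(x)$ of partitions of~$k$. By Pieri's rule the second stage replaces each $\chi^\mu$ by $\sum_\lambda\chi^\lambda$, the sum running over all $\lambda$ obtained from $\mu$ by adding a horizontal $f$-strip when $\theta=1_{S_{n-k}}$, and a vertical $f$-strip when $\theta=\sgn_{S_{n-k}}$, where $f=n-k$. Hence $\psi$ is multiplicity-free if and only if $\pi_x$ is multiplicity-free \emph{and} the $f$-strip images of distinct members of $D$ are pairwise disjoint.

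For the ``if'' direction, the case where $x$ is a fixed-point-free involution and $\theta=\sgn$ is precisely the theorem of Inglis, Richardson and Saxl. When $k=2$ we have $C_{S_2}(x)=S_2$, so $\pi_x=\chi^{(2)}$ and $|D|=1$; with a single source no overlap occurs, and $\psi$ is multiplicity-free for either~$\theta$. When $k=3$ we have $\pi_x=\chi^{(3)}+\chi^{(1^3)}$, and Pieri's rule shows the strip-images of $(3)$ and $(1^3)$ are disjoint, since the former have at most two nonzero rows and the latter at least three; the case $\theta=\sgn$ then reduces to the case $\theta=1$ on tensoring with $\sgn_{S_n}$, using that $C_{S_3}(x)\le A_3$, so that $\sgn_{S_n}$ restricts trivially to $C_{S_3}(x)$.

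For the converse I must show every other $x$ fails. Whenever $\pi_x$ is itself not multiplicity-free the reduction already finishes the argument, and a degree (or Burnside rank) estimate comparing $\pi_x(1)=|x^{S_k}|$ with the sum of the degrees of the irreducible characters of $S_k$ eliminates all sufficiently large classes — in particular every cycle type with a part at least~$5$, and every type with a part~$3$ or~$4$ accompanied by enough further cycles. This leaves a short list of exceptional types whose conjugation character happens to be multiplicity-free and so cannot be excluded at the level of $S_k$, namely the single $4$-cycle and small mixed types such as $(3,2)$ and $(3,3)$; these must be killed using the fixed points. For each I exhibit $\mu_1,\mu_2\in D(x)$ with a common $f$-strip extension $\lambda$, forcing $\left<\psi,\chi^\lambda\right>\ge2$. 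For the single $4$-cycle, $D=\{(4),(2,2),(2,1,1)\}$, and $(4),(2,2)$ share the horizontal-strip extension $(n-2,2)$ while $(2,2),(2,1,1)$ share the vertical-strip extension $(3,2,1^{f-1})$, so both values of~$\theta$ fail once $f\ge2$. Finally, for a fixed-point-free involution with $k\ge4$ and $\theta=1$, the even partitions $(k)$ and $(k-2,2)$ both lie in $D$ and both admit the horizontal-strip extension $(n-2,2)$ as soon as $f\ge2$; hence $\theta=1$ fails and $\theta$ must equal $\sgn$.

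The main obstacle is this last part of the ``only if'' direction. The linear-character reduction to $\pi_x$ is genuinely insufficient, because several exceptional cycle types have multiplicity-free conjugation characters and survive every bound available at the level of $S_k$; the constraint on the cycle type therefore comes essentially from the fixed points. The delicate point is to locate, uniformly across the (a priori unbounded) family of surviving types, a pair of constituents of $\pi_x$ admitting a common horizontal- or vertical-strip extension for the relevant value $f=n-k$, and to check that the hypothesis $n\ge7$ always places $f$ above the threshold at which this overlap first appears. Knowing enough of the decomposition set $D(x)$ to guarantee such a pair — rather than merely knowing that $(k)\in D(x)$ — is where the real work lies, and it is precisely the interplay between the size of the support $k$ and the number of fixed points $f$ that the bound $n\ge 66$ elsewhere, and $n\ge 7$ here, is designed to control.
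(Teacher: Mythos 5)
Your overall strategy --- inducing in two stages through $S_k\times S_{n-k}$, reducing multiplicity-freeness of $\psi$ to that of the conjugation character $\pi_x=1_{C_{S_k}(x)}\ind^{S_k}$, and then testing the surviving cycle types for a common horizontal- or vertical-strip extension --- is exactly the paper's, and your handling of the genuine survivors (the $2$-cycle, the $3$-cycle, the $4$-cycle, and the pair $(k)$, $(k-2,2)$ ruling out $\theta=1_{S_{n-k}}$ for fixed-point-free involutions) is correct. The gap is in the step that is supposed to whittle the cycle types down to a short list. The claim that comparing $|x^{S_k}|$ with the sum $t_k$ of the degrees of the irreducible characters of $S_k$ eliminates every type with a part at least $5$, and every type with a part $3$ or $4$ and ``enough further cycles'', is false, and not merely in finitely many cases: for the entire family $(3,2^m)$ one has $|x^{S_k}|<t_k$ (for instance $210<232$ in $S_7$, $2520<2620$ in $S_9$, $34650<35696$ in $S_{11}$; in general $|x^{S_k}|$ is twice the size of the single involution class of type $(2^m,1^3)$, hence is dominated by the sum $t_k$ of \emph{all} the involution class sizes). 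The single $5$-cycle ($24<26$ in $S_5$) and $(3^3)$ ($2240<2620$ in $S_9$) also evade the bound. None of these types appears in your list of exceptions, so the proof as written never shows that $\psi$ fails to be multiplicity-free for them; a Burnside-rank variant does not rescue the argument, since the lower bound $k!/|C_{S_k}(x)|^2$ for the rank grows only polynomially in $k$ along $(3,2^m)$ while the number of partitions of $k$ grows superpolynomially.

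The missing ingredient is the orbit-counting lemma the paper attributes to Frobenius: the multiplicity of $\chi^{(k-2,2)}$ in $\pi_x$ equals $t_2(C)-t_1(C)$, the number of orbits of $C=C_{S_k}(x)$ on $2$-subsets of $\{1,\dots,k\}$ minus the number on points. For $(3,2^m)$ with $m\ge 2$ the centralizer has $2$ orbits on points and $4$ on pairs, so $\chi^{(k-2,2)}$ occurs twice in $\pi_x$ and you are done; more generally the counts for $\chi^{(k-1,1)}$ and $\chi^{(k-2,2)}$ force any surviving fixed-point-free type to be $(2^m)$, $(3^m)$, $(3,2)$, or a single cycle of length at most $5$. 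A degree comparison then genuinely does dispose of $(3^m)$ for $m\ge 4$, and $(5)$ and $(3^3)$ fall to direct computation (the former contains $\chi^{(3,1,1)}$ twice, the latter $\chi^{(5,2,2)}$ twice). Only after this reduction does the set of survivors coincide with your list $(2^m)$, $(3)$, $(4)$, $(3,2)$, $(3,3)$, at which point your strip-overlap analysis closes the argument correctly.
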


We prove this theorem in \S 2 below. The proof is straightforward,
and will help to introduce the techniques used in the remainder
of the paper.

In light of the theorem of Inglis, Richardson and Saxl,
it is very natural to ask whether
every irreducible character of a symmetric group is a 
constituent of a multiplicity-free \emph{permutation} character.
Our second theorem, which builds on work of Saxl \cite{Saxl},
gives the classification needed to show that
this is very far from the case.

\begin{theorem}\label{thm:main}
Let $n \ge 66$.
The permutation character of $S_n$ acting on the cosets
of a subgroup $G$ is multiplicity-free if and only if one of:

\begin{thmlist}
\newcommand{\ep}{;}

\item[\emph{(a1)}]  $A_n \le G \le S_n$ or $A_{n-1} \le G \le S_{n-1}$ or
$G = A_{n-2} \times S_2$ or $G = S_{n-2} \times S_2$
or $G = A_n \cap (S_{n-2} \times S_2)$;

\item[\emph{(a2)}] $A_{n-k} \times A_k \le G \le S_{n-k} \times S_k$
where $3 \le k < (n-1)/2$;

\item[\emph{(b1)}] $n=2k$ and $A_k \times A_k < G \le S_k \wr S_2$\ep 

\item[\emph{(b2)}]
$n=2k+1$ and either $A_{k+1} \times A_k < G \le S_{k+1} \times S_k$
or $G$ is a subgroup of $S_k \wr S_2$ of index $\le 2$
other than $S_k \times S_k$\ep

\item[\emph{(c)}] $n = 2k$ or $n = 2k+1$ and $G = S_2 \wr S_k$\ep 

\item[\emph{(d)}] $n = 2k$ where $k$ is odd and $G = A_{2k} \cap (S_2 \wr S_k)$\ep


\item[\emph{(e)}] $G = G_k \times A_{n-k}$ or $G = G_k \times S_{n-k}$
where~$k$ is either $5$, $6$ or $9$, or $G = A_n \cap (G_k \times S_{n-k})$
where $k = 5$ or $6$, and
$G_5$ is the Frobenius group of
order $20$ acting on $5$ points, $G_6$ is $\mathrm{PGL}(2,5)$ in its
natural projective action on $6$ points and
$G_9$ is $\PGammaL(2,8)$ in its natural projective
action on $9$ points.
\end{thmlist}
\end{theorem}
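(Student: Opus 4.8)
The plan is to prove both implications, with essentially all of the work lying in the ``only if'' direction. The structural observation I would establish first is that multiplicity-freeness is inherited by overgroups: if $G \le H \le S_n$ then $1_H\Ind^{S_n}$ occurs as a summand of $1_G\Ind^{S_n}$ (since $1_G\Ind^H$ contains $1_H$), so whenever the latter is multiplicity-free the former is too. This lets me organise the argument around the maximal subgroups of $S_n$, using the O'Nan--Scott theorem to split into the intransitive, imprimitive and primitive cases, and then descending to the relevant subgroups within each case.

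For the ``if'' direction and for the descent the main engine is the decomposition of induced products. If $\mu \vdash k$ then $\bigl(\chi^\mu \times 1_{S_{n-k}}\bigr)\Ind^{S_n}$ and $\bigl(\chi^\mu \times \sgn_{S_{n-k}}\bigr)\Ind^{S_n}$ are given by Pieri's rule, adding respectively a horizontal or a vertical strip of size $n-k$, and are each individually multiplicity-free. Writing $1_{A_{n-k}\times A_k}\Ind^{S_{n-k}\times S_k} = (1+\sgn)\times(1+\sgn)$ and inducing the four terms to $S_n$, the question of whether $1_G\Ind^{S_n}$ is multiplicity-free for a subgroup $G$ of the Young subgroup $S_{n-k}\times S_k$ reduces to checking that the relevant Pieri and Littlewood--Richardson expansions have pairwise disjoint supports. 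For $k \ge 3$ and $n$ large these supports are disjoint, yielding (a2) and the $k=2$ part of (a1); the overlaps that arise for $k \in \{1,2\}$ are exactly what forces the separate treatment there. The same mechanism, applied to $\pi_k = 1_{G_k}\Ind^{S_k}$, explains (e): because $n-k$ is large, the horizontal strips grown from the distinct constituents of the (multiplicity-free) character $\pi_k$ cannot collide, so $\bigl(\pi_k \times 1_{S_{n-k}}\bigr)\Ind^{S_n}$ remains multiplicity-free.

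For the imprimitive case I would appeal to the classical Gelfand pairs. The pair $(S_{2k}, S_k\wr S_2)$ gives (b1), while $S_2 \wr S_m$ is precisely the centraliser $C_{S_{2m}}(t)$ of a fixed-point-free involution, so the multiplicity-freeness of its permutation character is the $f=0$ instance (and, for $n=2k+1$, the $f=1$ instance) of the theorem of Inglis, Richardson and Saxl quoted above; this gives (c) and, after the index-$\le 2$ descent, (d) and the wreath-product part of (b2). It then remains to show that $S_k\wr S_m$ with $m\ge 3$ and $k\ge 2$ fails, which I would do by exhibiting a repeated two-row constituent through counting orbits on pairs.

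The hard part is the primitive case together with the threshold $n \ge 66$. Here the key is that multiplicity-freeness of $1_M\Ind^{S_n}$ tightly constrains the orbit numbers of $M$: for $r \le n/2$ the multiplicity of $\chi^{(n-r,r)}$ equals the excess of the number of $M$-orbits on $r$-subsets over the number on $(r-1)$-subsets, so these orbit counts may increase by at most one at each level, forcing $M$ to be highly homogeneous. By Livingstone--Wagner sufficient homogeneity forces high transitivity, and hence, for $n$ large and $M$ primitive, $M \in \{A_n, S_n\}$; building on Saxl's analysis of the primitive case, the finitely many exceptions all have degree well below $66$ and survive only as the factors $G_5, G_6, G_9$ inside the intransitive products of (e). Verifying that no primitive exception intrudes for every $n \ge 66$, and simultaneously completing the delicate bookkeeping of the index-$2$ subgroups and intersections with $A_n$ in (b2) and (d)---where passing to an index-$2$ subgroup adds a monomial summand $\epsilon\Ind^{S_n}$, for $\epsilon$ the linear character of $M$ with the given kernel, which must itself be multiplicity-free and disjoint from $1_M\Ind^{S_n}$---is where I expect the bulk of the difficulty to lie.
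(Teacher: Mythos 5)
Your high-level plan (reduce to Saxl-style cases, use Young/Pieri and the two-row multiplicity count via orbits on $r$-subsets, quote the Inglis--Richardson--Saxl theorem for $S_2\wr S_k$) matches the paper's strategy in outline, but there is a concrete gap at exactly the point where the theorem's hypothesis $n\ge 66$ is earned, and you have misattributed the source of that bound. The threshold does not come from the primitive case: the primitive exceptions $G_5$, $G_6$, $G_9$ are dispatched for all $n\ge 20$. It comes from the proper subgroups of $S_2\wr S_k$ of index at most $4$. Besides $A_{2k}\cap(S_2\wr S_k)$, which yields case (d) via the parity argument on even partitions, there are two further index-$2$ subgroups, $S_2\wr A_k$ and a third subgroup $\Delta_k$, and an index-$4$ subgroup; all of these must be shown to \emph{fail} for large $k$, since none of them appears in the final list. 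Your ``index $\le 2$ descent'' correctly notes that passing to such a subgroup adds a monomial summand $\theta\Ind^{S_{2k}}$ which must be multiplicity-free and disjoint from $1_{S_2\wr S_k}\Ind^{S_{2k}}$, but you give no way to compute that summand. The paper needs a genuine new result here (Lemma~\ref{Lemma:TwistedPerm}): for $\theta_k$ the pullback of $\sgn$ along $S_2\wr S_k\twoheadrightarrow S_k$, one has $\theta_k\Ind^{S_{2k}}=\sum\chi^{2[\alpha]}$ over partitions $\alpha$ of $k$ with distinct parts, proved by adapting the James--Saxl restriction/induction argument. Deciding when this collides with $\sum_{\lambda \text{ even}}\chi^\lambda$ (or its conjugate, for $\Delta_k$) is then a Diophantine problem in $k$, soluble for all $k\ge 25$ in one case and all $k\ge 33$ in the other, with both bounds sharp ($S_2\wr A_{24}$ in $S_{48}$ genuinely is multiplicity-free). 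That is why the theorem requires $n\ge 66$; nothing in your sketch produces this number or rules out these subgroups.

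Two further points. First, you propose to rederive the reduction to the candidate list from scratch via O'Nan--Scott and Livingstone--Wagner; the paper instead cites Saxl's 1981 classification verbatim (for $n>18$) and only prunes it, which is a far shorter route --- if you insist on reproving the reduction you are taking on a substantial additional burden that your sketch does not discharge. Second, in case (i) your claim that the four Pieri/Young supports are disjoint for all $k\ge 3$ is not quite right: at $k=\lfloor n/2\rfloor$ the hook constituents $\chi^{(n-k,1^k)'}$ and $\chi^{(n-k+1,1^{k-1})}$ coincide, which is why $A_k\times A_k$ itself is excluded from (b1) and why (a2) carries the restriction $k<(n-1)/2$. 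This boundary collision, and the consequent separate analysis of the eight subgroups between $A_k\times A_k$ and $S_k\wr S_2$ (including which ones survive induction to $S_{2k+1}$ for case (b2)), needs to be done explicitly.
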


It seems unavoidable that
cases (a) and (b) have a slightly fiddly statement. The part 
of the proof that leads to these cases is, however, the most routine.
The reader may wish to refer ahead to Figure~2 in \S 2.2
which shows the subgroups  
of $S_k \wr S_2$ that appear in case (b).

If $n=2k$ then the subgroup $S_2 \wr S_k$ in case~(c) is
the centralizer of a fixed-point-free involution
in $S_{n}$. It may also be helpful to recall
that $S_2 \wr S_k \le S_{2k}$ is permutation isomorphic 
to the Weyl group of type~$B_k$ in its
action on the vectors $\{ \pm \epsilon_1, \ldots,
\pm \epsilon_k \}$ 
spanning the root space for \hbox{$\mathsf{so}(2k+1, \mathbf{C})$}.
(See \cite[Chapter~3]{Humphreys} for more details.)
Under the natural embedding $\mathsf{so}(2k, \mathbf{C}) 
\rightarrow \mathsf{so}(2k+1,\mathbf{C})$,
the Weyl group of type~$D_k$ 
acts on $\{ \pm \epsilon_1, \ldots,
\pm \epsilon_k \}$ as a subgroup of index $2$ in~$B_k$;
it is this subgroup which appears in case~(d) of
Theorem~\ref{thm:main}. We define the subgroups in case~(e) 
more fully in \S 2.3 below.

A 
corollary
of Theorem~\ref{thm:main} is
a complete list of the irreducible characters
of $S_n$ for $n \ge 66$  
which may appear in a multiplicity-free
permutation representation (see Corollary~\ref{cor} below).
The reader will see that there
are very few such characters;
in particular,
if~$n \ge 20$, then $\chi^{(n-4,3,1)}$
never appears in such a representation.\footnote{This result,
together with 
Theorem~\ref{thm:IRSconverse}, was first stated and proved  
in the author's D.~Phil thesis \cite[Chapter~4]{WildonDPhil}.}
Since a permutation character of a symmetric
group is multiplicity-free if and only if all the orbitals
in the corresponding permutation action are self-paired
(see \cite[page~46]{CameronPermGps}), Theorem~\ref{thm:main} also
serves to classify such actions. 

While Theorem~\ref{thm:main} is stated for $n \ge 66$,
the proof given in \S 2 below gives a complete classification
for all $n \ge 20$. The predicted list of subgroups has been checked
using the computer algebra
package {\sc magma} \cite{Magma} to be correct for $20 \le n \le 23$.
The same check has been made for the list
of irreducible characters in Corollary~\ref{cor}.
One could easily use {\sc magma} to generate the full
list of subgroups of symmetric groups of degree $< 20$
which give multiplicity-free permutation characters, but we shall not
pursue this possibility here. 
The author recently learned of parallel work by C.~Godsil
and K.~Meagher~\cite{Meagher}, to appear in Annals of
Combinatorics. Their paper gives a complete
classification of multiplicity-free
permutation characters of every degree. When $n \ge 20$,
their results are in agreement with Theorem~2.

Our third theorem concerns the permutation modules whose ordinary
characters appear in Theorem~\ref{thm:main}. Note that, by
\cite[Theorem~3.5]{CameronPermGps},
these are exactly
the permutation modules whose centralizer algebra is abelian.
To understand the statement of this theorem
the reader will need to know a little about
Specht modules: see \cite[Chapters~4,~5]{James} for an
introduction.
We recall here that if~$S_\mathbf{Z}^\lambda$
is the integral Specht module for~$\mathbf{Z}S_n$ 
labelled by the partition~$\lambda$ of~$n$
then, regarding  the entries of the representing matrices
as rational numbers,~$S_\mathbf{Z}^\lambda$
affords the ordinary irreducible character~$\chi^\lambda$. If,
instead, we regard the entries as elements of
a field $F$ of prime characteristic, then the resulting module 
for $FS_n$, denoted simply $S^\lambda$, is usually
no longer irreducible---indeed, determining the composition factors
of Specht modules in prime characteristic
is one of the main unsolved problems in modular representation theory.

We say that an $FS_n$-module $U$ has a \emph{Specht filtration}
if there is a chain of submodules
\[ 0 \subset U_1 \subset \cdots \subset U_r \subset U \]
such that each successive quotient is isomorphic to a Specht module.

\begin{theorem}\label{thm:mod}
Let $F$
be an algebraically closed
field of prime characteristic~$p > 3$ and let $n \ge 66$.
 If~$G$ is a subgroup of~$S_n$
such that the ordinary permutation character $1_G\ind^{S_n}$
is multiplicity-free, then each summand of the permutation module
\[ F \Ind_G^{S_n} \]
is a self-dual module with a Specht filtration.
The Specht module
$S^\lambda$ for~$FS_n$ appears in a Specht filtration
of $F \ind_G^{S_n}$
if and only if $\chi^\lambda$ is a
constituent of the ordinary character $1_G\ind^{S_n}$.
\end{theorem}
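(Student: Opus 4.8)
The plan is to read the possible subgroups $G$ off Theorem~\ref{thm:main} and then to combine two ingredients: explicit (usually integral) Specht filtrations of the resulting permutation modules, and the theory of Specht filtrations in characteristic $p>3$ due to Hemmer and Nakano. Two facts are immediate. First, $F\Ind_G^{S_n}$ is self-dual, since $S_n$ permutes the natural coset basis. Second, this already forces each indecomposable summand to be self-dual: writing $F\Ind_G^{S_n}=\bigoplus_i U_i$, each $U_i$ is a trivial source module and so lifts to a trivial source lattice $\widehat{U_i}$ over a complete discrete valuation ring with residue field $F$ and fraction field of characteristic~$0$, with $\widehat{U_i}^{\,*}$ lifting $U_i^{\,*}$. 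Because every character of $S_n$ is rational-valued, $\widehat{U_i}$ and $\widehat{U_i}^{\,*}$ afford the same ordinary character; and as the sum of these characters is the multiplicity-free $1_G\ind^{S_n}$, the $\widehat{U_i}$ afford characters with pairwise disjoint sets of constituents, in particular distinct. Since duality permutes the $U_i$ while preserving their lifted characters, it fixes each $U_i$, so every summand is self-dual.

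It remains to equip $F\Ind_G^{S_n}$ with a Specht filtration. In cases~(a) and~(b), $G$ is normal in a Young subgroup $S_\mu$ with elementary abelian $2$-group quotient, so (the characteristic being odd) the integral coset module $\mathbf{Z}\Ind_G^{S_\mu}$ is filtered by the linear characters $\sgn_T$ of $S_\mu$; inducing to $S_n$ and using that each signed Young permutation module $\sgn_T\Ind_{S_\mu}^{S_n}$ is an induction product of the integral Specht modules $S^{(\mu_i)}_{\mathbf{Z}}$ ($i\notin T$) and $S^{(1^{\mu_i})}_{\mathbf{Z}}$ ($i\in T$), and so carries an integral Specht filtration governed by the Littlewood--Richardson rule, yields an integral Specht filtration of $\mathbf{Z}\Ind_G^{S_n}$ itself. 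The wreath and exceptional cases need separate, and harder, treatment: for $S_2\wr S_k$ in case~(c) one must filter the permutation module on the perfect matchings of $2k$ points by the lattices $S^{2\la}_{\mathbf{Z}}$ of the Inglis--Richardson--Saxl decomposition $1_{S_2\wr S_k}\ind^{S_{2k}}=\sum_{\la\vdash k}\chi^{2\la}$; case~(d) then reduces to this one by twisting with $\sgn$ and invoking self-duality; and in case~(e) the small modules $F\Ind_{G_k}^{S_k}$ are filtered by inspection and combined with the factor on the remaining $n-k$ points by the same induction-product device. Constructing the filtration in these non-Young cases is the first point requiring genuine work.

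With $F\Ind_G^{S_n}$ now known to be Specht filtered, the hypothesis $p>3$ finishes both assertions, and it is here indispensable. By the theorem of Hemmer and Nakano, for $p\ge 5$ the class of $FS_n$-modules admitting a Specht filtration is closed under direct summands, and the multiplicity $[U:S^\la]$ of each Specht module in such a filtration depends only on $U$. Closure under summands shows that every $U_i$ inherits a Specht filtration, proving the first assertion. For the second, the Specht filtration constructed above has factors matching, one for one, the irreducible constituents of $1_G\ind^{S_n}$ --- in the cases built from an integral filtration this follows by base-changing to the fraction field, where $S^\la_{\mathbf{Z}}$ affords $\chi^\la$, and the remaining cases were arranged to share this property. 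Well-definedness of the multiplicities then shows that $S^\la$ occurs in some --- hence every --- Specht filtration of $F\Ind_G^{S_n}$ exactly when $\langle 1_G\ind^{S_n},\chi^\la\rangle\ne 0$, that is, exactly when $\chi^\la$ is a constituent of $1_G\ind^{S_n}$. This appeal to Hemmer and Nakano is the principal obstacle, and the reason for excluding $p\le 3$: there the filtration multiplicities need not be well-defined, nor need a summand of a Specht-filtered module be Specht filtered, and the statement genuinely fails.
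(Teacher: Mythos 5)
Your overall architecture matches the paper's: self-duality of summands via lifting to characteristic zero and multiplicity-freeness, explicit Specht filtrations of the permutation modules case by case, and then the Hemmer--Nakano theorems (closure of Specht-filtered modules under direct summands, and well-definedness of filtration multiplicities, both requiring $p>3$) to pass to summands and to identify the factors with the constituents of $1_G\ind^{S_n}$. Two points, however, need repair.

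First, your claim that in cases (a) and (b) the group $G$ is normal in a Young subgroup $S_\mu$ with elementary abelian $2$-quotient is false for most of case (b): the subgroups $S_k\wr S_2$, $\Gamma_k$, $A_{2k}\cap(S_k\wr S_2)$ and $A_k\wr S_2$ of case (b1)/(b2) are not contained in $S_k\times S_k$, so your signed-Young-module filtration of $\mathbf{Z}\ind_G^{S_\mu}$ does not apply to them. The gap is repairable in the spirit of your own argument: since $p$ is odd and $[G:A_k\times A_k]$ is a power of $2$, the module $F\ind_G^{S_{2k}}$ is a direct summand of $F\ind_{A_k\times A_k}^{S_{2k}}$, which you have already filtered, and Hemmer--Nakano closure under summands finishes the job (this is exactly how the paper treats the index-$4$ subgroups of $S_k\wr S_2$; for the index-$\le 2$ subgroups it instead writes down explicit direct sum decompositions such as $F\ind_{S_k\times S_k}^{S_{2k}}=F\ind_{S_k\wr S_2}^{S_{2k}}\oplus\psi_k\ind_{S_k\wr S_2}^{S_{2k}}$). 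But as written your case division does not cover these groups.

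Second, case (c) is asserted rather than proved. You correctly identify that one must produce a Specht filtration of $F\ind_{S_2\wr S_k}^{S_{2k}}$ with factors $S^{2\lambda}$, and you correctly flag it as the point of genuine difficulty, but you give neither a construction nor a reference. The paper outsources precisely this step to Theorem~2 of Paget's article \cite{PagetBrauer}; without that input (or an equivalent construction) the hardest case of the theorem is missing, and case (d) collapses with it since you reduce (d) to (c). The remaining ingredients of your argument --- the self-duality proof, the use of James' modular Young/Pieri rules for the induction products, the block-theoretic or projectivity arguments sketched as ``inspection'' for $F\ind_{G_k}^{S_k}$ with $k=5,6,9$, and the Brauer-character/fraction-field argument identifying the Specht factors --- all align with the paper and are sound.
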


The author first suspected the existence of this theorem
after reading Paget's paper~\cite{PagetBrauer}. Paget's main 
result is that the permutation modules coming from
case (c) of Theorem~\ref{thm:main} have a Specht filtration, 
with the expected Specht factors.
It is a simple matter to adapt her work to deal with case~(d).

In \S 4 we show that if $F$ is an algebraically
closed field of characteristic~$3$ then
$F\ind_{\PGammaL(2,8)}^{S_9}$
does not have a Specht filtration. This gives an interesting
example
of a permutation module in odd prime characteristic
without a Specht filtration. 
Using more sophisticated
techniques, Mikaelian 
has
constructed a family of examples of such modules for fields of 
characteristic $p > 3$.\footnote{Personal communication, A.~Mikaelian,
Oxford, July 2007.}
The existence of such
modules is a clear indication that results such as Theorem~\ref{thm:mod}
cannot be obtained by any routine `reduction mod $p$' argument.

A preliminary investigation has shown that the situation in 
characteristic~$2$ is still more complicated. This
is to be expected on theoretical grounds:
see~\cite{HN} 
and~\cite{HemmerFiltrations} 
for an introduction to the general
theory of Specht filtrations.
To demonstrate the difficulties of working
in characteristic~$2$ we end \S 4 by showing 
that although the module $\mathbf{F}_2 \ind_{S_3 \wr S_2}^{S_6}$
has a Specht filtration, it \emph{does not} have a Specht filtration
with the Specht factors indicated by its ordinary character.

\section{Proof of Theorem~\ref{thm:IRSconverse}}
We first classify the multiplicity-free
permutation characters given by the actions of
symmetric groups on their conjugacy classes. For this 
we shall need the following lemma, ultimately
due to Frobenius,
which implies that multiplicity-free permutation characters only
come from permutation actions with relatively high degrees
of homogeneity. 

\begin{lemma}\label{lemma:tworow}
Let $G \le S_n$ be a permutation group acting on $\{1,2,\ldots,n\}$.
Let~$\pi$ be the permutation character of the action
of $S_n$ on the cosets of~$G$. Let $t_r(G)$ be 
the number of orbits of $G$ on $r$-subsets of $\{1,2,\ldots,n\}$.
If~$0 \le r \le n/2$ then
\[ \bigl< \pi, \chi^{(n-r,r)} \big> =
\begin{cases} t_r(G) - t_{r-1}(G) & \text{if $r \ge 1$} \\
                1 & \text{if $r = 0$}
\end{cases} . \quad\Box
\]
\end{lemma}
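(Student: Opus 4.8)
The plan is to express the inner product $\langle \pi, \chi^{(n-r,r)}\rangle$ as a difference of orbit counts by passing through the permutation character of $S_n$ on $r$-subsets and invoking the classical decomposition of that character into two-row constituents. First I would note that the stabilizer in $S_n$ of an $r$-subset of $\{1,2,\ldots,n\}$ is $S_{n-r}\times S_r$, so that the permutation character of $S_n$ acting on $r$-subsets is $1_{S_{n-r}\times S_r}\ind^{S_n}$. The key classical input, ultimately Frobenius's and recoverable from Young's rule, is that for $0\le r\le n/2$ this character decomposes multiplicity-freely as
\[ 1_{S_{n-r}\times S_r}\ind^{S_n} = \sum_{s=0}^{r}\chi^{(n-s,s)}. \]
Pairing $\pi$ with this and using linearity of the inner product gives $\langle \pi, 1_{S_{n-r}\times S_r}\ind^{S_n}\rangle = \sum_{s=0}^{r}\langle \pi,\chi^{(n-s,s)}\rangle$.

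Next I would evaluate the left-hand side combinatorially. By Frobenius reciprocity and Mackey's theorem, for any subgroup $H$ the number $\langle 1_G\ind^{S_n}, 1_H\ind^{S_n}\rangle$ equals the number of $G$-orbits on the coset space $S_n/H$, equivalently the number of $(G,H)$-double cosets in $S_n$. Taking $H = S_{n-r}\times S_r$, the coset space $S_n/H$ is identified $S_n$-equivariantly with the set of $r$-subsets of $\{1,2,\ldots,n\}$, so this orbit count is precisely $t_r(G)$. Hence
\[ t_r(G) = \sum_{s=0}^{r}\langle \pi, \chi^{(n-s,s)}\rangle \]
for every $r$ with $0\le r\le n/2$.

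Finally I would extract the stated formula by telescoping. For $r\ge 1$, subtracting the identity for $r-1$ from that for $r$ leaves $\langle \pi,\chi^{(n-r,r)}\rangle = t_r(G)-t_{r-1}(G)$. For the base case $r=0$, there is a unique $G$-orbit on the single $0$-subset, so $t_0(G)=1$, which agrees with $\langle \pi,\chi^{(n)}\rangle = \langle \pi, 1_{S_n}\rangle = 1$, the latter since every transitive permutation character contains the trivial character exactly once. This completes the argument. I do not expect a genuine obstacle here: the derivation is formal once the two-row decomposition of the $r$-subset character is in hand, and the only point requiring care is ensuring the range $0\le r\le n/2$ so that $\chi^{(n-r,r)}$ is a genuine irreducible and the decomposition above is exact with no cancellation.
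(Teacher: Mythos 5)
Your proof is correct. The paper states this lemma without proof, citing it as ultimately due to Frobenius, and your argument is the standard derivation: identify $1_{S_{n-r}\times S_r}\ind^{S_n}$ with the $r$-subset permutation character and decompose it via Young's rule as $\sum_{s=0}^{r}\chi^{(n-s,s)}$ (valid since $r\le n/2$), use Frobenius reciprocity and Mackey to interpret $\bigl<1_G\ind^{S_n},1_{S_{n-r}\times S_r}\ind^{S_n}\bigr>$ as the orbit count $t_r(G)$, and telescope. All steps are sound, including the base case $t_0(G)=1=\bigl<\pi,\chi^{(n)}\bigr>$.
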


We shall also need the forms of Young's rule and
Pieri's rule
given in the proposition below. Note that Pieri's rule follows
from Young's rule if we conjugate by the sign character,
so there is no need for us to use the Littlewood--Richardson rule.
(For a proof of Young's rule see \cite[Chapter~17]{James}.
The modular version of Young's rule proved by James in this reference
will be useful to us later---see Theorem~\ref{thm:JamesYoung}
in \S 3 below.)

\vbox{
\begin{proposition}\label{prop:Youngsrule}
Let $n \ge k \ge 1$ and let $\mu$ be a partition of $k$.
\begin{romanthmlist}
\item \emph{Young's rule:} $(\chi^\mu \times 1_{S_{n-k}}) \ind_{S_{k} \times S_{n-k}}^{S_n} = \sum \chi^\la$
where the sum is over all partitions~$\la$ obtained from~$\mu$ by
adding $n-k$ nodes, no two in the same column.

\item \emph{Pieri's rule:}
$(\chi^\mu \times \sgn_{S_{n-k}})\ind_{S_{k} \times S_{n-k}}^{S_n} = \sum \chi^\la$
where the sum is over all partitions~$\la$ obtained from~$\mu$ by
adding $n-k$ nodes, no two in the same \hbox{row.\quad$\Box$} 
\end{romanthmlist}
\end{proposition}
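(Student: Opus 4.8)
The plan is to treat Young's rule, part~(i), as the substantive input and to derive Pieri's rule, part~(ii), from it by a formal twist with the sign character. For part~(i) I would simply quote the proof in James~\cite[Chapter~17]{James}, since the combinatorial content---that $(\chi^\mu \times 1_{S_{n-k}})\ind_{S_k \times S_{n-k}}^{S_n}$ is the sum of those $\chi^\la$ for which $\la$ is obtained from $\mu$ by adding $n-k$ nodes with no two in the same column---is exactly what is established there. All the remaining work lies in part~(ii).

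For the derivation of Pieri's rule I would first assemble three standard identities. Writing $\nu'$ for the partition conjugate to $\nu$, these are: tensoring conjugates the label, $\chi^\nu \otimes \sgn_{S_m} = \chi^{\nu'}$; the sign character restricts multiplicatively, $\sgn_{S_n}\res_{S_k \times S_{n-k}} = \sgn_{S_k} \times \sgn_{S_{n-k}}$; and the tensor identity (projection formula)
\[ \sgn_{S_n} \otimes \bigl( \theta \ind_{S_k \times S_{n-k}}^{S_n} \bigr) = \bigl( (\sgn_{S_k} \times \sgn_{S_{n-k}}) \otimes \theta \bigr) \ind_{S_k \times S_{n-k}}^{S_n}, \]
valid for any character $\theta$ of $S_k \times S_{n-k}$.

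Next I would apply Young's rule to the conjugate partition $\mu'$ and tensor both sides with $\sgn_{S_n}$. On the left, the three identities collapse the expression to $(\chi^\mu \times \sgn_{S_{n-k}})\ind_{S_k \times S_{n-k}}^{S_n}$, since $\sgn_{S_k} \otimes \chi^{\mu'} = \chi^\mu$ and $\sgn_{S_{n-k}} \otimes 1_{S_{n-k}} = \sgn_{S_{n-k}}$. On the right, each summand $\chi^\la$ (with $\la$ obtained from $\mu'$ by adding a horizontal strip) becomes $\chi^{\la'}$. The final step is the purely combinatorial observation that conjugation is a bijection carrying the horizontal strips $\la/\mu'$ to the vertical strips $\la'/\mu$; relabelling $\la'$ as the summation variable then yields exactly the sum in part~(ii).

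The only point requiring care---rather than a genuine obstacle---is keeping the conjugations straight: one must check that ``no two added nodes in the same column'' for the pair $\mu', \la$ corresponds precisely to ``no two added nodes in the same row'' for the pair $\mu, \la'$, and that the transpose map $\mu \mapsto \mu'$ is a bijection on the partitions of~$k$, so that proving~(ii) for every $\mu$ follows from~(i) applied to every $\mu'$. Since Young's rule is quoted, there is no hard step; the whole argument is the sign-twist already recorded in the remark preceding the proposition.
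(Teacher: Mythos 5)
Your proposal is correct and matches the paper's treatment exactly: the paper also quotes James \cite[Chapter~17]{James} for Young's rule and notes, in the sentence immediately preceding the proposition, that Pieri's rule follows by conjugating with the sign character. The only difference is that you spell out the sign-twist and the horizontal-strip/vertical-strip bijection in detail, whereas the paper leaves this to the reader.
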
}


\vbox{
\begin{proposition}\label{prop:conj}
Let $n \geq 7$, let $x \in S_n$ be a non-identity
permutation and let~$\pi$ be the permutation
character of $S_n$ acting on  the conjugacy
class of~$x$. 
Then~$\pi$ 
is multiplicity-free if and only if $x$ has one of the cycle types:

\begin{romanthmlist}
\item $(2,1^{n-2})$,

\item $(3,1^{n-3})$,

\item $(2^m)$ when $n = 2m$ or $n = 2m+1$.
\end{romanthmlist}

\noindent 
Furthermore, if $\pi$ is not multiplicity-free, then \emph{either} $\pi$
contains~$\chi^{(n-2,2)}$ more than once \emph{or} $x$ has cycle type 
$(3^m)$ where $n = 3m$ or $n=3m+1$.
\end{proposition}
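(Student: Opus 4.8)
The plan is to detect most non-multiplicity-free cases using the single character $\chi^{(n-2,2)}$ via Lemma~\ref{lemma:tworow}, and then to dispose of the cycle type $(3^m)$ by a separate, more delicate computation. Throughout write $G = C_{S_n}(x)$, so that $\pi = 1_G\ind^{S_n}$, and recall from Lemma~\ref{lemma:tworow} that $\langle \pi, \chi^{(n-2,2)}\rangle = t_2(G) - t_1(G)$.

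First I would compute the orbit numbers $t_1(G)$ and $t_2(G)$ directly. Since $G$ permutes the cycles of $x$ of each fixed length transitively while rotating within each cycle, $t_1(G)$ equals the number of distinct cycle lengths of $x$. Classifying the $2$-subsets of $\{1,\ldots,n\}$ according to whether their two points lie in the same cycle, in two distinct cycles of equal length, or in cycles of different lengths, one obtains a closed formula: if $x$ has distinct cycle lengths $\ell_1 > \cdots > \ell_s$ with multiplicities $a_1,\ldots,a_s$, then
\[ t_2(G) - t_1(G) = \sum_{i=1}^s \lfloor \ell_i/2 \rfloor + \#\{ i : a_i \ge 2\} + \binom{s}{2} - s. \]
A short case analysis on $s$ and the $\ell_i$ then shows that this quantity is at least $2$ \emph{unless} $x$ has one of the types (i), (ii), (iii) or has type $(3^m)$ with $n = 3m$ or $n = 3m+1$. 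This simultaneously establishes the ``furthermore'' clause and reduces the main classification to checking these remaining types.

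For the sufficiency direction I would compute each surviving permutation character explicitly. For a transposition or a $3$-cycle the centralizer is $S_2 \times S_{n-2}$ or $C_3 \times S_{n-3}$, and Young's rule (together with $1_{C_3}\ind^{S_3} = \chi^{(3)} + \chi^{(1^3)}$ in the second case) expresses $\pi$ as a sum of distinct irreducibles. For type $(2^m)$ with $n = 2m$ the centralizer is the centralizer of a fixed-point-free involution, so multiplicity-freeness is exactly the $f = 0$ case of Theorem~\ref{thm:IRS}; for $n = 2m+1$ one induces that even-partition decomposition across one further point and observes that each resulting partition, having a single odd part, arises from a unique even partition and so occurs only once.

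The main obstacle is ruling out type $(3^m)$, precisely the case the ``furthermore'' clause flags. Here the two-row characters are useless: one checks that $\langle \pi, \chi^{(n-r,r)}\rangle \le 1$ for every admissible $r$ once $m$ is small, so no $\chi^{(n-r,r)}$ can witness a repeated constituent and a genuinely different character is needed. Instead I would compute the whole character through the chain $C_3 \wr S_m \le S_3 \wr S_m \le S_{3m}$. Writing the induced character in symmetric-function shorthand as the plethysm $h_m[h_3 + e_3] = \sum_{k=0}^m h_{m-k}[h_3]\, h_k[e_3]$, where $h_3$ and $e_3$ record the two constituents $\chi^{(3)}$ and $\chi^{(1^3)}$ of $1_{C_3}\ind^{S_3}$, one sees that, because the $e_3$ term genuinely contributes, a single Schur function is forced into two different summands of the product: concretely $\chi^{(3m-5,2,1,1,1)}$ (for $n = 3m$) occurs both in $h_{m-1}[h_3]\,e_3$ and in $h_{m-2}[h_3]\,h_2[e_3]$, with the analogous constituent handling $n = 3m+1$ and the tiny case $n = 7$ (type $(3,3,1)$, where $\chi^{(4,2,1)}$ is repeated) checked by hand. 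I expect the bookkeeping here---organising the plethysm expansion and pinning down one Schur function that is forced to occur twice uniformly in $m$---to be the genuinely fiddly part of the argument.
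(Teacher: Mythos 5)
Your argument is correct, and it diverges from the paper's in one substantive place. The reduction to the candidate cycle types is the paper's argument in different packaging: the paper first applies Lemma~\ref{lemma:tworow} with $\chi^{(n-1,1)}$ to force the centralizer to have at most two orbits on points and only then counts orbits on pairs, while you go straight to a closed formula for $t_2(G)-t_1(G)$; I checked your formula and the ensuing case analysis, and for $n\ge 7$ it isolates exactly the types (i), (ii), (iii), $(3^m)$ and $(3^m,1)$, which also yields the ``furthermore'' clause. The sufficiency direction coincides with the paper's (Young's rule plus the $f=0,1$ cases of the Inglis--Richardson--Saxl theorem). The genuine difference is the exclusion of $(3^m)$ and $(3^m,1)$: the paper uses a soft degree count, showing via the Inglis--Richardson--Saxl model that $\pi(1)$ exceeds the number of elements of order at most $2$ (which equals $\sum_\lambda \chi^\lambda(1)$) once $n\ge 12$, and then checks $n=7,9,10$ by hand; you instead expand $h_m[h_3+e_3]=\sum_k h_{m-k}[h_3]\,h_k[e_3]$ and locate $\chi^{(3m-5,2,1^3)}$ in both the $k=1$ and $k=2$ summands. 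Your construction is sound for $m\ge 3$: Lemma~\ref{lemma:tworow} applied to $S_3\wr S_{m-1}$ puts $s_{(3m-5,2)}$ in $h_{m-1}[h_3]$, one computes $h_2[e_3]=s_{(2,1^4)}+s_{(2,2,2)}$, the Pieri checks go through, and multiplication by $h_1$ transfers the repetition to $n=3m+1$; only $n=7$ remains for a hand check, where your $\chi^{(4,2,1)}$ and the paper's $\chi^{(3,1^4)}$ both do occur twice. Your route costs the plethysm formalism but buys an explicit repeated constituent uniform in $m$, where the paper's count is purely existential.
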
}

\medskip
\noindent\emph{Proof.}
That $\pi$ is multiplicity-free in cases (i) and (ii) follows 
from Young's rule, while case (iii) is given by
the $f=0$ and $f=1$ cases of the theorem of Inglis, Richardson and Saxl.
(As Saxl notes in \cite{Saxl}, the $f=0$ case of this theorem dates back
at least to Thrall: see \cite[Theorem~III]{Thrall}.)

Now suppose that $\pi$ 
is multiplicity-free.
Applying
Lemma~\ref{lemma:tworow}
with the character $\chi^{(n-1,1)}$ shows that 
$t_1(C_{S_n}(x)) \le 2$, and hence
$C_{S_{n}}(x)$ has either~$1$ or~$2$ orbits on $\{1,\ldots,n\}$.
Similarly, applying Lemma~\ref{lemma:tworow} with
the character~$\chi^{(n-2,2)}$ shows that
\begin{equation}
\label{eq:pairs}
	t_2\bigl(C_{S_n}(x)\bigr) - t_1\bigl(C_{S_n}(x)\bigr) \le 1, 
\end{equation}
and hence $C_{S_n}(x)$ has at most $3$ orbits on the $2$-subsets
of $\{1,\ldots,n\}$.

Suppose first of all that $C_{S_{n}}(x)$ is transitive on $\{1,\ldots,n\}$.
 Then~$x$
must have cycle type~$(l^{m})$ for some $l \ge 2$ and $m \ge 1$ such that $n = lm$. The 
centralizer~$C_{S_n}(x)$ is permutation isomorphic to the wreath product
\hbox{$C_{l} \wr S_{m} \le S_n$}. It is not hard to see that the number of 
orbits of $C_l \wr S_m$ on unordered pairs from $\{1,\ldots,n\}$ is
\[ \begin{cases}  \lfloor l/2 \rfloor +1 & \text{if $m>1$} \\
					\lfloor l/2 \rfloor & \text{if $m=1$}
\end{cases}. \]
Comparing with \eqref{eq:pairs}, this
shows that if~$\pi$ is multiplicity-free then $l \leq 3$.

Now suppose that $C_{S_{n}}(x)$ has $2$ orbits on $\{1,\ldots,n\}$. 
The previous paragraph counts the number of orbits of $C_{S_n}(x)$ 
on unordered pairs
with both elements lying in a single orbit of $C_{S_n}(x)$ on $\{1,\ldots,n\}$.
It is clear that
there is  exactly one orbit involving unordered pairs of the
form $\left\{i,j\right\}$ with $i$ and $j$ taken from different
orbits of $C_{S_n}(x)$. We leave it to the reader to check
that these remarks imply that either
$n = 2m+1$ and $x$ has cycle type $(2^m,1)$,
or $n=3m+1$ and $x$ has cycle type $(3^m,1)$.

To finish the proof we must show that if $x$ has cycle type~$(3^m)$ 
or~$(3^m,1)$
then~$\pi$ is not multiplicity-free, 
even though it contains $\chi^{(n-2,2)}$ only once. 
The  simplest way to do this seems to be to count degrees.
Let $t_n$ be the sum of the degrees of all the
irreducible characters of  $S_n$.
We shall show that~$\pi(1) > t_n$ whenever $n \ge 12$. This
leaves only three cases to be analysed separately.

It follows from the theorem of Inglis, Richardson and Saxl that~$t_n$
is the number of elements of $S_n$ of order at most $2$ (of course this result can also be seen in other ways, for example via 
the Frobenius--Schur count of involutions, or the Robinson--Schensted
correspondence).
From this it follows that~$t_n = t_{n-1} + (n-1)t_{n-2}$ for $n \ge 2$
and hence that
$2t_{n-1} \le t_n \le nt_{n-1}$ for $n \ge 2$.
These results imply that
\[ t_{3(m+1)} = (6m+4)t_{3m} + 9m(m+1) t_{3m-1} \le \mfrac{1}{2}(9m^2+21m+8) t_{3m}.\]
Let $u_n = n!/ |C_{S_n}(x)|$ be the degree of $\pi$.
A short inductive argument using the last inequality
shows that $t_{3m} < u_{3m}$ for all $m \ge 4$.
Now, provided that $m \ge 4$, we have
\[ t_{3m+1} \le (3m+1)t_{3m} < (3m+1)u_{3m} = u_{3m+1} \]
which is the other inequality we require.

When $n = 10$, one finds that $\pi(1) = 22400$ and $t_{10} = 9496$, and so the
degree-counting approach also works in this case.
The remaining two cases can be checked by hand; one source for
the required character tables is \cite[Appendix~I.A]{JK}.
One finds that if $x$ has 
cycle type~$(3,3,1)$ then $\pi$ contains~$\chi^{(3,1^4)}$ twice, while
 if $x$ has cycle type $(3,3,3)$ then~$\pi$ contains both~$\chi^{(5,2,2)}$ 
 and $\chi^{(4,2,1,1,1)}$ twice.\quad$\Box$

\medskip
For $n \le 6$, one can show by direct calculation that if the permutation
character of~$S_n$ acting on the conjugacy class of a non-identity element~$x$
is multiplicity-free, then~$x$ has one  of the cycle types in the
table below. Note that if $n \le 4$ then all non-identity classes appear.

\begin{figure}[htp]
\begin{center}
\begin{tabular}{l|l}
$n$   & cycle types 
\\[1pt] \hline
$2$ &  $(2)$ \rule{0pt}{12pt}\\  
$3$ &  $(2,1)$, $(3)$ \\ 
$4$ & $(2,1^2)$, $(2,2)$, $(3,1)$, $(4)$\\
$5$ & $(2,1^3)$, $(2,2,1)$, $(3,1^2)$, $(3,2)$ \\
$6$ & $(2,1^4)$, $(2^3)$, $(3,1^3)$, $(3,3)$
\end{tabular}
\end{center}
\caption{Non-identity conjugacy classes of symmetric groups of
degree $\le 6$ whose associated permutation character
is multiplicity-free.}
\end{figure}

We are now ready to prove Theorem~\ref{thm:IRSconverse}. 
Let $n \ge 7$ and let $k \le n$.
Let $x \in S_k$ be a fixed-point-free permutation, let $\theta$
be a $1$-dimensional character of $S_{n-k}$, and let
$\psi = 
(1_{C_{S_k}(x)} \times \theta)\ind^{S_n}_{C_{S_k}(x) \times S_{n-k}}$. 
If $\theta$ is the trivial character then $\psi$ is merely
the permutation character of $S_n$ acting on the conjugacy
class of $S_n$ containing~$x$, so the result
follows from Proposition~\ref{prop:conj}.

We may therefore assume that $k < n$ and that $\theta = \sgn_{S_{n-k}}$.
Since
\[ \psi
= (1_{C_{S_k}(x)}\Ind^{S_k} \times \theta)\Ind_{S_k \times S_{n-k}}^{S_k}, \]
if $\psi$ is multiplicity-free, then $1_{C_{S_k}(x)}\ind^{S_k}$ must
also be multiplicity-free. 
If~$C_{S_k(x)}$ is not
transitive on $\{1,\ldots,k\}$ then we have seen that
\[ \bigl<  1_{C_{S_k}(x)}\Ind^{S_k}, \chi^{(k-1,1)} \bigr> \ge 1. \]
It now follows  from Pieri's rule that $\psi$ contains~$(k,1^{n-k})$ at least twice.
Hence, $C_{S_n}(x)$ acts transitively, and by
Proposition~\ref{prop:conj} and the table above, 
either~$x$ is a fixed-point-free involution in $S_k$, or 
$x$ has cycle type $(3)$,~$(4)$ or~$(3^2)$ with $k = 3$,~$4$ or~$6$ respectively.

If $x$ is a fixed-point-free involution
then the theorem of Inglis, Richardson and Saxl states
that~$\psi$ is multiplicity-free. If $x$ is a $3$-cycle then
it follows from Pieri's rule that~$\pi$ is multiplicity-free.
If~$x$ is a $4$-cycle then
\[ \psi 
= \bigl( (\chi^{(4)} + \chi^{(2,2)} + \chi^{(2,1,1)}) \times 
\sgn_{S_{n-4}}\bigr) \Ind_{S_4 \times S_{n-4}}^{S_n}, \]
which contains $\chi^{(2,2,1^{n-4})}$ twice. Similarly, 
if~$x$ has cycle type~$(3^2)$ then
\[ \psi =
\bigl( (\chi^{(6)} + \chi^{(4,2)} + \chi^{(4,1,1)} + \chi^{(3,1^3)}
+ \chi^{(2,2,2)} + \chi^{(2,1^4)}) \times \sgn_{S_{n-6}} \bigr)
\Ind_{S_6 \times S_{n-6}}^{S_n}, \]
which contains $\chi^{(4,1^{n-4})}$ twice. This completes the proof 
of Theorem~\ref{thm:IRSconverse}.

\section{Proof of Theorem~\ref{thm:main}}
A very large step towards classifying the multiplicity-free
permutation characters of symmetric groups
was made by Saxl in \cite{Saxl}. In this paper Saxl gives
a list of subgroups of $S_n$ for $n \ge 19$, which he proves
contains all subgroups $G$ such that the permutation character
of $S_n$ acting on the cosets of $G$ is multiplicity-free.
Our contribution is to prune his list of the unwanted subgroups.
There are several interesting features
that still remain for us to discover, and  to obtain
the most uniform result, we must assume that~$n \ge 66$.

Since we shall frequently need to refer to it, we give a verbatim
statement of Saxl's theorem from \cite[page 340]{Saxl}. 
There is a minor error in case (v), to which the groups
$A_n \cap (S_{n-k} \times G_k)$ for $k= 5,6$ should be added.
(It follows from the argument at the bottom of page~342 
of Saxl's paper  that
these groups should be considered for inclusion, and in fact 
both give
rise to multiplicity-free characters.)

\vbox{
\begin{utheorem}[Saxl]
Let $S_n$ be multiplicity-free on the set of cosets 
\emph{[denoted~$\Omega$]} of a
subgroup $G$. Assume that $n > 18$. Then one of:
\begin{romanthmlist}
\item $A_{n-k} \times A_k \le G \le S_{n-k} \times S_k$ for
some $k$ with $0 \le k < n/2$;
\item $n=2k$ and $A_k \times A_k < G \le S_k \wr S_2$;
\item $n=2k$ and $G \le S_2 \wr S_k$ of index at most $4$;
\item $n=2k+1$ and $G$ fixes a point of $\Omega$ and is one of the groups in \emph{(ii)} or \emph{(iii)} on the rest of $\Omega$; or
\item $A_{n-k} \times G_k \le G \le S_{n-k} \times G_k$ where $k$ is $5$, $6$
or $9$ and $G_k$ is Frobenius of order $20$, 
$\mathrm{PGL}(2,5)$ or $\PGammaL(2,8)$
respectively.
\end{romanthmlist}
\end{utheorem}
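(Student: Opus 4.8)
The only hypothesis is that the permutation character $1_G\ind^{S_n}$ is multiplicity-free, and the conclusion is a pure constraint on~$G$; so the plan is to turn multiplicity-freeness into combinatorial homogeneity conditions on~$G$ and then to quote the structure theory of permutation groups. The engine is Lemma~\ref{lemma:tworow}. Writing $t_r = t_r(G)$ for the number of orbits of~$G$ on $r$-subsets of $\{1,\ldots,n\}$, the two-row constituents give
\[ t_r - t_{r-1} = \bigl\langle 1_G\ind^{S_n}, \chi^{(n-r,r)} \bigr\rangle \le 1 \qquad (0 \le r \le n/2), \]
so, since $t_0 = 1$, we have $t_r \le r+1$ throughout this range. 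In particular $t_1 \le 2$, so~$G$ has at most two orbits on points, and I would split the argument on this number.

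Suppose first that~$G$ has two orbits, of sizes $n-k \ge k$, so that $G \le S_{n-k} \times S_k$. Since passing to a subgroup only refines orbits, $t_r \ge t_r(S_{n-k}\times S_k) = r+1$ for $r \le k$, and combined with $t_r \le r+1$ this pins down $t_r = r+1$ exactly for $r \le k$ --- the same profile as the full direct product. This rigidity is decisive: the $r$-subsets fall into $r+1$ types according to how many of their points lie in the smaller orbit, and matching the count forces each type to be a single $G$-orbit. Taking the two extreme types shows that~$G$ is transitive on the $r$-subsets of each orbit for every $r \le k$; in particular the action of~$G$ on the smaller orbit is set-transitive. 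The classification of set-transitive groups (Beaumont--Peterson) then leaves only $A_k \le G|_{\text{small}} \le S_k$ or, in the degrees $k \in \{5,6,9\}$, one of three exceptional groups --- which are precisely $G_5$, $G_6$, $G_9$. On the larger orbit~$G$ is at least $k$-homogeneous, and for $k \ge 2$ with $n-k$ large this forces $A_{n-k} \le G|_{\text{large}}$ by the Livingstone--Wagner theorem together with the classification of homogeneous and multiply transitive groups. This yields cases~(i) and~(v); the residual possibilities $k \le 1$ are point stabilizers that fold into the transitive analysis, and the odd-degree variant $n = 2k+1$ with a fixed point gives case~(iv).

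Now suppose~$G$ is transitive, so $t_1 = 1$ and hence $t_2 \le 2$. If~$G$ is imprimitive, with $m$ blocks of size~$l$ and $n = lm$, then $G \le S_l \wr S_m$ and its orbits on $r$-subsets number at least the partitions of~$r$ into at most~$m$ parts each at most~$l$; since $t_r \le r+1$, this partition count may not exceed $r+1$ for any $r \le n/2$, which excludes $l, m \ge 3$ together and leaves only two blocks (case~(ii), $G \le S_k \wr S_2$) or blocks of size two (case~(iii), $G \le S_2 \wr S_k$), the index bounds coming from a finer count on triples and quadruples. If~$G$ is primitive, then, since multiplicity-freeness makes every orbital self-paired, $t_2 \le 2$ means that~$G$ is $2$-transitive or of rank~$3$, and the homogeneity forced by the displayed inequality all the way up to $r = n/2$ should be incompatible with any such group of degree $n > 18$ other than $A_n$ and~$S_n$.

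The main obstacle is precisely this primitive case, together with the upgrade from $k$-homogeneity to $A_{n-k}$ on the larger orbit: neither can be obtained from character theory alone. I would invoke the classification of finite simple groups, through the O'Nan--Scott theorem, to enumerate the $2$-transitive and rank-$3$ groups and then eliminate them one family at a time using the rapidly growing orbit counts on larger subsets. The real subtlety is to push the homogeneity estimates far enough --- out to subsets of size about $n/2$ --- that no proper primitive group of large degree can survive, and to confirm that the only proper set-transitive groups occur in the three exceptional degrees~$5$,~$6$,~$9$. Assembling the transitive and intransitive analyses then produces exactly the list~(i)--(v).
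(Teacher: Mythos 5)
The paper does not actually prove this statement: it is Saxl's theorem, quoted verbatim from \cite{Saxl} as an external input (the paper only corrects a minor omission in case (v)), so there is no internal proof to measure your attempt against. Your outline does follow the broad strategy of Saxl's real argument --- converting multiplicity-freeness into the orbit bounds $t_r(G)\le r+1$ via Lemma~\ref{lemma:tworow}, splitting on the number of point-orbits, and invoking the classifications of set-transitive and homogeneous groups --- so as a roadmap it points in the right direction.

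As written, though, several steps would not go through. In the intransitive case you only control the two images of $G$ in $S_{n-k}$ and $S_k$, whereas the conclusion $A_{n-k}\times A_k\le G$ concerns $G$ itself; a Goursat-type argument (using simplicity of the alternating groups and $A_{n-k}\not\cong A_k$ for $k<n/2$) is needed to exclude subdirect products, and for $k=2,3,4$ the large orbit is only known to be $k$-homogeneous, which by itself does not force $A_{n-k}$ (for instance $\mathrm{AGL}(1,q)$ is $2$-homogeneous). More seriously, the quantitative engine that actually eliminates the primitive groups, the proper $2$-homogeneous images, and the deep subgroups of $S_2\wr S_k$ is absent from your sketch: the inequality $t_r\le r+1$ for $r$ up to $n/2$ yields $|G|\ge\binom{n}{\lfloor n/2\rfloor}\big/(\lfloor n/2\rfloor+1)$ --- equivalently $|S_n:G|\le\sum_\lambda\chi^\lambda(1)$, the number of elements of $S_n$ of order at most $2$ --- and it is this super-exponential lower bound on $|G|$, played against upper bounds for orders of primitive groups not containing $A_n$, that does the real work. ``Rank at most $3$'' plus O'Nan--Scott alone leaves infinitely many candidates in every degree, and the index-$\le 4$ bound in case (iii) does not follow from counting orbits on triples and quadruples; it requires the order bound together with the structure of the normal subgroups of $S_2\wr S_k$. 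So the proposal is a plausible skeleton of Saxl's proof, but the decisive estimates are missing.
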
}
%

We now examine each of Saxl's cases in turn. 
The most interesting
case~(iii) is left to the end, and 
we consider case~(iv) together with~(ii) and~(iii).
We shall frequently need the well-known
result (see for example \cite[6.6]{James}) 
that if $\lambda$ is a partition then
\begin{equation}
\label{eq:chartwist}
\chi^\lambda \times \sgn = \chi^{\lambda'} 
\end{equation} 
where $\lambda'$
is the conjugate partition to $\lambda$. (Recall that $\lambda'$
is the partition
defined by $\lambda'_i = |\{ j : \lambda_j \ge i\}|$; the
diagram of~$\lambda'$ is obtained from the diagram of~$\lambda$ by
reflecting it in its main diagonal.)
We shall also frequently use the fact that if $H < G < S_n$
and $1_H\ind^{S_n}$ is multiplicity-free, then $1_G\ind^{S_n}$
is also multiplicity-free.

\subsection{Case (i)}
If $k = 0$ or $k=1$ then  the subgroups
from this case clearly give multiplicity-free characters. 
They contribute to our case~(a1). 
If \hbox{$2 \le k \le n/2$} then
it follows from \eqref{eq:chartwist} together with
 Young's rule and Pieri's rule that
\begin{align} 1_{A_{n-k} \times A_k}\Ind^{S_n}
&= (1_{S_{n-k} \times S_k}\Ind^{S_n} + \sgn_{S_{n-k}} \times 1_{S_k}\Ind^{S_n})
(1 + \sgn_{S_n})\notag\\
&= \sum_{i=0}^k \chi^{(n-i,i)} + \sum_{i=0}^k \chi^{(n-i,i)'} 
 + \chi^{(n-k,1^k)} 
+ \chi^{(n-k,1^k)'}
\notag\\ &\qquad\qquad +\chi^{(n-k+1,1^{k-1})} + \chi^{(n-k+1,1^{k-1})'}.
\label{eq:altalt} 
\end{align}
Hence, for $k$
in this range, $1_{A_{n-k} \times A_k}\ind^{S_n}$ is
multiplicity-free unless $k=2$ or $k = \lfloor n/2 \rfloor$.
When $k=2$ it is easily seen that $1_{S_{n-2}}\ind^{S_n}$
is not multiplicity-free, while if $G$ is one of the other two
index $2$-subgroups of $S_{n-2} \times S_2$, namely $A_{n-2} \times S_2$
or $A_n \cap (S_{n-2} \times S_2)$, then $1_G\ind^{S_n}$ \emph{is} 
multiplicity-free.
This gives the remaining groups in our case (a1) and 
the groups in case (a2).

If $n$ is even
then we have already dealt with all the groups from Saxl's case (i).
If $n=2k+1$ is odd then we still have to deal with the
subgroups of $S_{k+1} \times S_k$ properly containing $A_{k+1} \times A_k$.
A calculation similar to \eqref{eq:altalt}
shows that all these groups give multiplicity-free characters; they
appear in our case~(b2).

\subsection{Case (ii)} 
There are
three index $2$ subgroups of $S_k \wr S_2$, namely
$S_k \times S_k$, $A_{2k} \cap (S_k \wr S_2)$ and one other,
which we shall denote by $\Gamma_k$. 
Figure~2 overleaf shows the lattice  of subgroups we
must consider; note that they are in bijection
with the subgroups of the dihedral group of order~$8$.

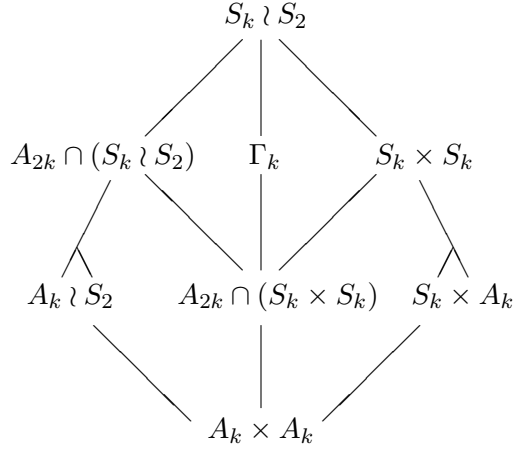
\begin{figure}[!h]\label{fig:Sk2}
\begin{center}
\setlength{\unitlength}{0.22cm}
\begin{picture}(16,26)
\put(9.1,10.1){\line(1,1){5.8}}
\put(6.9,10.1){\line(-1,1){5.8}}
\put(8,10.1){\line(0,1){5.8}}
\put(6.9,24){\line(-1,-1){5.8}}
\put(8,24){\line(0,-1){5.8}}
\put(9.1,24){\line(1,-1){5.8}}

\put(8,6.8){\line(0,-1){5}}
\put(-1,15.5){\line(-1,-2){2.9}}
\put(-3,11.5){\line(1,-2){0.9}}
\put(17.5,15.5){\line(1,-2){2.9}}
\put(19.5,11.5){\line(-1,-2){0.9}}

\put(-2,6.8){\line(1,-1){5.8}}
\put(17.5,6.8){\line(-1,-1){5.8}}

\put(3,8.1){$A_{2k} \cap (S_k \times S_k)$}
\put(14.8,16.5){$S_k \times S_k$}
\put(7.275,16.5){$\Gamma_k$}
\put(-7,16.5){$A_{2k} \cap (S_k \wr S_2)$}
\put(5.8,25){$S_k \wr S_2$}
\put(-6,8.1){$A_k \wr S_2$}
\put(17,8.1){$S_k \times A_k$}
\put(4.8,0){$A_k \times A_k$}

\end{picture}
\end{center}

\caption{Subgroups of $S_k \wr S_2$ containing $A_k \times A_k$ when
$k$ is even. If $k$ is odd then the 
 labels $A_{2k} \cap (S_k \wr S_2)$ and $\Gamma_k$
should be swapped. The forked lines to the subgroups
$S_k \times A_k$ and $A_k \wr S_2$ indicate that they appear
in two conjugate copies.}
\end{figure}

From \eqref{eq:altalt} we know that $1_{A_k \times A_k}\ind^{S_{2k}}$
is not multiplicity-free. However, it turns out that
every subgroup of $S_k \wr S_2$ which properly
contains $A_k \times A_k$ does give
a multiplicity-free permutation character.
These groups appear in our case (b1).
For later use we give their permutation characters
in full. 
We shall need the
character $\psi_k$ of $S_k \wr S_2$
defined by the composition of maps
$S_k \wr S_2 \twoheadrightarrow S_2
\cong \{ \pm 1 \}$; note that $\Gamma_k = \ker \psi_k\sgn_{S_k \wr S_2}$.
Example~2.3 in~\cite{Saxl} tells us that
\begin{equation}\label{eq:twowreath}
1_{S_k \wr S_2}\Ind^{S_{2k}} = \sum_{i=0}^{\lf k/2 \rf}\chi^{(2k-2i,2i)}. 
\end{equation}
Given \eqref{eq:twowreath}, it follows
 from the known decomposition of $1_{S_k \times S_k}\ind^{S_{2k}}$
that
\begin{equation}
\label{eq:psi} \psi_k \Ind_{S_k\wr S_2}^{S_{2k}} = 
\sum_{i=0}^{\lf (k-1)/2 \rf}\chi^{(2k-2i-1,2i+1)}. 
\end{equation}
Using  \eqref{eq:chartwist} and \eqref{eq:psi} together with Young's rule and Pieri's rule 
we find that
\begin{align*}
1_{A_{2k} \cap (S_k \wr S_2)}\Ind^{S_{2k}} &=
1_{S_k \wr S_2}\Ind^{S_{2k}} +  \sgn_{S_k \wr S_2}\Ind^{S_{2k}} \\ &\qquad =
\sum_{i=0}^{\lf k/2 \rf}\chi^{(2k-2i,2i)} 
+ \sum_{i=0}^{\lf k/2 \rf} \chi^{(2k-2i,2i)'}, \\
1_{\Gamma_k} \Ind^{S_{2k}} &= 1_{S_k \wr S_2}\Ind^{S_{2k}} + 
\psi_k\sgn_{S_k \wr S_2}\Ind^{S_{2k}} \\ &\qquad = 
\sum_{i=0}^{\lf k/2 \rf}\chi^{(2k-2i,2i)} + 
\sum_{i=0}^{\lf (k-1)/2 \rf}\chi^{(2k-2i-1,2i+1)'}.
\end{align*}

Similar calculations give the permutation characters induced
from the index $4$ subgroups:
\begin{align*}
1_{S_k \times A_k}\Ind^{S_{2k}} &= \sum_{i=0}^{k}\chi^{(2k-i,i)}
+ \chi^{(k+1,1^{k-1})} + \chi^{(k,1^{k})}, \\
1_{A_{2k} \cap (S_k \times S_k)}\Ind^{S_{2k}} &=\sum_{i=0}^{k}\chi^{(2k-i,i)}
+ \sum_{i=0}^{k}\chi^{(2k-i,i)'}, \\
1_{A_k \wr S_2}\Ind^{S_{2k}} &=\sum_{i=0}^{\lf k/2 \rf} \chi^{(2k-2i,2i)} +
\chi^{(k+1,1^{k-1})} + \chi^{(k,1^k)} + \alpha_k, 
\end{align*}
where in the last line
\[ \alpha_k = \begin{cases} \sum_{i=0}^{k/2}\chi^{(2k-2i,2i)'} & \text{if $k$ is even} \\ \sum_{i=0}^{(k-1)/2}\chi^{(2k-2i-1,2i+1)'} & \text{if $k$ is odd.}
\end{cases} \]

To decide which of
these characters remain multiplicity-free
when induced from $S_{2k}$ to $S_{2k+1}$, and so should be taken from
Saxl's case (iv),
we first note that
\[ 1_{S_k \times S_k}\Ind^{S_{2k+1}} = \left( \sum_{r=0}^k \chi^{(2k-r,r)}\right)\Ind^{S_{2k+1}} \]
contains $\chi^{(2k,1)}$ twice. (In the second induction above,
Young's rule may be replaced with the ordinary branching rule: see
\cite[Chapter~9]{James}.)
Hence, if~$G$ is any subgroup of
$S_k \times S_k$, then $1_G\ind^{S_{2k+1}}$ is not multiplicity-free.
Similarly one shows that the permutation character induced from
$A_k \wr S_2$ is not multiplicity-free, while the characters induced from
$A_{2k} \cap (S_k \wr S_2)$ and~$\Gamma_k$ are. This gives
the remaining groups in our case (b2).

\subsection{Case (v)}
We now turn to Saxl's case (v). 
The subgroups $G_k$ for $k = 5, 6, 9$
are each $\left\lfloor k/2 \right\rfloor$-homogeneous.  
(It follows from Young's rule and Lemma~4 that
this is a necessary condition
for the induced characters $1_{G_k \times S_{n-k}}\ind^{S_n}$ to be
multiplicity-free for every $n$.) 
They are:
the~$2$-transitive Frobenius group $G_5 = \left<(12345), (2354)\right> \le S_5$; 
the $3$-transitive subgroup~$G_6 = \mathrm{PGL}(2,5) \le S_6$; and
the $3$-transitive but $4$-homogeneous subgroup~$G_9 = \PGammaL(2,8) \le S_9$. 
Here $\PGammaL(2,8)$ denotes the split extension of $\mathrm{PGL}(2,8)$
given by the order~$3$ Frobenius twist $F : \mathbf{F}_8 \rightarrow
\mathbf{F}_8$.

Calculation using Young's rule shows that the permutation characters
\begin{align*}
1_{G_5} \times 1_{S_{n-5}} \Ind^{S_n} &= \bigl( \chi^{(5)} + \chi^{(2,2,1)} \bigr) 
\times 1_{S_{n-5}} \Ind_{S_5 \times S_{n-5}}^{S_n}, \\
1_{G_6} \times 1_{S_{n-6}} \Ind^{S_n} &= \bigl( \chi^{(6)} + \chi^{(2,2,2)} \bigr)
\times 1_{S_{n-6}} \Ind_{S_6 \times S_{n-6}}^{S_n}
\end{align*}
are always multiplicity-free. A nice way to obtain these equations
uses the outer automorphism of $S_6$: if $H \cong S_5$ is 
a point stabiliser in $S_6$, then~$H$ is mapped under an outer
automorphism of $S_6$ to a subgroup permutation isomorphic to 
$G_6 = \mathrm{PGL}(2,5)$.
Inspection of the character table of $S_6$ shows that the 
constituent $\chi^{(5,1)}$ of $1_H\ind^{S_6}$ is mapped to the 
constituent~$\chi^{(2,2,2)}$ of~$1_{G_5}\ind^{S_6}$.
Since $G_6 \cap H$ is conjugate in $S_5$ to $G_5$, the
character induced from~$G_5$ can then be obtained by
restriction.

The remaining character from Saxl's case (v) is
\begin{equation}\label{eq:PGammaL} 
1_{G_9} \times 1_{S_{n-9}} \Ind^{S_n} \! = \! 
\bigl( \chi^{(9)} + \chi^{(1^9)} + \chi^{(5,1^4)} +
\chi^{(4,4,1)} + \chi^{(3,2^3)} \bigr) \times 1_{S_{n-9}} \Ind_{S_9 \times S_{n-9}}^{S_n}, 
\end{equation}
which is  always multiplicity-free.
We outline one way to obtain this equation. One easily
checks that $\PGammaL(2,8)  \le A_9$, so by~\eqref{eq:chartwist}, 
$\chi^\lambda$ appears in 
$\pi = 1_{\PGammaL(2,8)}\ind^{S_9}$ if and only if $\chi^{\lambda'}$
appears. By
Lemma~\ref{lemma:tworow}, none of 
$\chi^{(8,1)}, \chi^{(7,2)}$, $\chi^{(6,3)}, \chi^{(5,4)}$, or their
conjugates,
appears in $\pi$. From
the equation 
\begin{equation*} 
1_{S_{n-r}} \times \sgn_{S_r} \Ind^{S_n} = \chi^{(n-r,1^r)} +
\chi^{(n-r+1,1^{r-1})} \quad \text{if $1 \le r < n$}
\end{equation*}
and Frobenius reciprocity one sees that
\[ \left< \pi, 
 \chi^{(9-r,1^r)} \right> = \rho_r - \rho_{r-1}
+ \cdots + (-1)^r \rho_0 \quad \text{if $0 \le r < 9,$}\]
where $\rho_i = 1$ if $\PGammaL(2,8) \cap (S_{9-r} \times S_r) \le S_{9-r} \times
A_r$, and $\rho_i = 0$ otherwise. 
(Since $\PGammaL(2,8)$ is
$4$-homogeneous, it does not matter which subgroup $S_{9-r} \times S_r
\le S_9$ we choose.) Clearly $\rho_0 = \rho_1 = 1$.
The group $\mathrm{PGL}(2,8)$ has a unique conjugacy
class of elements of even order; these are involutions, and since
$\mathrm{PGL}(2,8)$ is sharply $3$-transitive, they must act 
with cycle type~$(2^4,1)$. Hence
$\rho_2 = \rho_3 = 0$. 
It follows from the identity
$(Fg)^3 = g^{F^2}g^Fg$ for $g \in \mathrm{PGL}(2,8)$, that
the only new even order that appears when we extend
$\mathrm{PGL}(2,8)$ to $\PGammaL(2,8)$ is~$6$. 
Therefore no $4$-cycles
appear in the cycle decomposition of any element of $\PGammaL(2,8)$,
and 
$\rho_4 = 1$. 
Hence, apart from~$\chi^{(9)}$ and $\chi^{(1^9)}$,
the only hook character to appear in $\pi$ is
$\chi^{(5,1^4)}$. We now have
\[ \pi = \chi^{(9)} + \chi^{(1^9)} + \chi^{(5,1^4)} + \psi\]
where $\psi$ has degree $168$.  With the exception of $\chi^{(3,3,3)}$
(which has degree~$42$)
and the pair $\chi^{(4,4,1)}$, $\chi^{(3,2^3)}$ (each of degree $84$), all the 
irreducible characters of $S_9$ that are still eligible to appear 
in~$\psi$ have
too high a degree. If~$\chi^{(3,3,3)}$ appears
four times, then we would have $\pi\bigl((12345)\bigr) = 10$;
the required character values may be computed by hand, or found 
in \cite[Appendix~I.A]{JK}.
However, $\PGammaL(2,8)$ contains no elements of order $5$, so clearly
$\pi((12345))= 0$.
Equation~\eqref{eq:PGammaL} follows.

It is straightforward to check using the formulae
\begin{align*} 
1_{G_k} \times 1_{A_{n-k}} \Ind^{S_n} &= \left(1_{G_k} \times 1_{S_{n-k}}
\right)\Ind^{S_n} + \left(
1_{G_k} \times \sgn_{S_{n-k}}\right)\Ind^{S_n}, \\
1_{A_n \cap (G_k \times S_{n-k})} \Ind^{S_n} &= 
\left(1_{G_k} \times 1_{S_{n-k}}\right) \Ind^{S_n} +
\left(1_{G_k} \times 1_{S_{n-k}}\right) \Ind^{S_n} \times \sgn_{S_n}
\end{align*}
and  Pieri's rule
that, provided $n\ge 20$, the
characters $1_{G_k} \times 1_{A_{n-k}} \ind^{S_n}$ and
$1_{A_n \cap (G_k \times S_{n-k})} \ind^{S_n}$ are also multiplicity-free.
This gives the groups in our case (e).

\subsection{Case (iii)}
It remains to deal with Saxl's case (iii): subgroups of \hbox{$S_2 \wr S_k$} of
index at most~$4$. By the theorem of Inglis, Richardson and Saxl,
$1_{S_2 \wr S_k}\!\ind^{S_{2k}}$ is
multiplicity-free. Moreover, this character is still multiplicity-free
if we induce up to $S_{2k+1}$, since
\[ 1_{S_2 \wr S_k} \Ind^{S_{2k+1}} = \sum_\lambda \chi^\lambda \]
where the sum is over all partitions $\lambda$ of $2k+1$ with exactly
one odd part. We therefore take $S_2 \wr S_k$ from Saxl's case (iv).
This gives our case (c).
It now only
remains to look at the proper subgroups
of~$S_2 \wr S_k$. 

Let $H$ be the unique
normal subgroup of $S_2 \wr S_k$ of index~$2$ 
in the base group $S_2 \times \cdots \times S_2$.
A straightforward argument shows that,
provided $k \ge 5$, 
the group~$H \rtimes A_k$
is the only subgroup of $S_2 \wr S_k$ of index~$4$.
This subgroup is normal in $S_2 \wr S_k$,
and the quotient group $S_2 \wr S_k \,\bigl/ H \! \rtimes \! A_k$
is isomorphic to $C_2 \times C_2$. 
It follows that there are three subgroups of index~$2$ 
in $S_2 \wr S_k$, namely $H \rtimes S_k$, $S_2 \wr A_k$, and
one other, which we shall denote by $\Delta_k$. 
The subgroup
lattice is shown in Figure~3 below.

\bigskip

\begin{figure}[h!]\label{fig:S2k}
\begin{center}
\setlength{\unitlength}{0.22cm}
\begin{picture}(16,17)
\put(9.1,1.1){\line(1,1){5.8}}
\put(6.9,1.1){\line(-1,1){5.8}}
\put(8,1.1){\line(0,1){5.8}}
\put(6.9,15){\line(-1,-1){5.8}}
\put(8,15){\line(0,-1){5.8}}
\put(9.1,15){\line(1,-1){5.8}}

\put(5.8,-0.9){$H \rtimes A_k$}
\put(14.8,7.5){$S_2 \wr A_k$}
\put(7.275,7.5){$\Delta_k$}
\put(-3.75,7.5){$H \rtimes S_k$}
\put(5.8,16){$S_2 \wr S_k$}
\end{picture}
\end{center}
\caption{Subgroups of index at most $4$ in $S_2 \wr S_k$}
\end{figure}
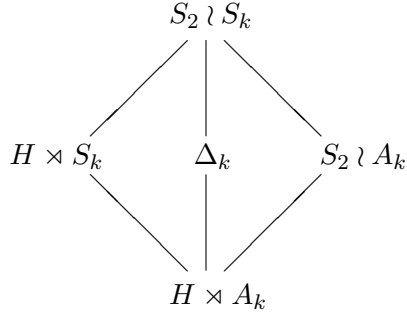

\bigskip

It is easy
to check that the
subgroup $H \rtimes S_k$ is equal to $A_{2k} \cap (S_2 \wr S_k)$.
Hence 
\[
1_{H \rtimes S_k}\Ind^{S_{2k}} = 1_{S_2 \wr S_k}\Ind^{S_{2k}} + 
\sgn_{S_2 \wr S_k}\Ind^{S_{2k}}
\]
and so
\begin{equation}\label{eq:collide} 
1_{H \rtimes S_k}\Ind^{S_{2k}} = \sum \chi^\lambda + \sum \chi^{\lambda'}
\end{equation}
where the sums are over all partitions $\lambda$ of $2k$ with only even parts.

From now on, we shall say that a partition all of whose parts
are even is \emph{even}.
We see from \eqref{eq:collide} that $1_{H \rtimes S_k}\ind^{S_{2k}}$ fails
to be multiplicity-free if and only if there is an even partition~$\lambda$
whose conjugate~$\lambda'$ is also even. If $k$ is even then~$(k,k)$
is such a partition, while if $k$ is odd then it is clear
that no such partition can exist. This gives case (d) of 
Theorem~\ref{thm:main}.

Suppose that $k$ is odd. If we induce the character 
$1_{H \rtimes S_k}\ind^{S_{2k}}$ up to~$S_{2k+1}$,
then we obtain the constituent $\chi^{(k+1,k)}$ twice:
once by adding a node to the even partition $(k+1,k-1)$, and
once by adding a node to the partition~$(k,k)$, whose conjugate~$(2^n)$
is even. The group $H \rtimes S_k$ is therefore not included
in those coming from Saxl's case (iv).

To complete the proof of Theorem~\ref{thm:main}, it 
suffices to show that if~\hbox{$k \ge 33$}, then 
neither of the permutation characters induced from
the other two index~$2$ subgroups of $S_2 \wr S_k$ is multiplicity-free.
In order to 
describe the constituents of these permutation characters
we shall use the following notation: if 
$\alpha = (a_1, a_2, \dots , a_r)$ is a partition of $k$
with distinct parts, let 
$2[\alpha] = 2[a_1, \ldots, a_r]$ denote the partition $\la$ of $2k$ whose 
leading diagonal hook
lengths are $2a_1, \ldots, 2a_r$, and such that 
$\la_i = a_i+i$ for $1 \le i \le r$. 
 For
instance, Figure 4 overleaf shows
 $2[4,3,1]$.

\begin{figure}[!h]
\begin{center}
\includegraphics{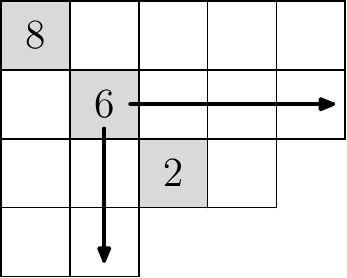}
\end{center}
\caption{Leading diagonal hook lengths in $2[4,3,1] = (5,5,4,2)$}
\end{figure}


We can now state the following lemma, which is the analogue of
\eqref{eq:psi} in~\S 2.2.

\vbox{
\begin{lemma}\label{Lemma:TwistedPerm}
Let $k \ge 2$ and let $\theta_k$ be the character of $S_2 \wr S_k$
defined by the composition of maps $S_2 \wr S_k \twoheadrightarrow S_k
\stackrel{\sgn}{\twoheadrightarrow} \{ \pm 1 \}$.
Then $1_{S_2 \wr A_k}\ind^{S_2 \wr S_k} = 1_{S_2 \wr S_k} + \theta_k$ and 
\[ \theta_{k}\Ind^{S_{2k}} = \sum \chi^{2[\alpha]} \]
where the sum is over 
all partitions $\alpha$ of $k$ with
distinct parts.
\end{lemma}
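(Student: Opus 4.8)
The first statement is immediate. Since $A_k$ has index $2$ in $S_k$, its preimage $S_2\wr A_k$ has index $2$ in $S_2\wr S_k$, so $1_{S_2\wr A_k}\ind^{S_2\wr S_k}$ is the sum of the two linear characters of $S_2\wr S_k$ that are trivial on $S_2\wr A_k$. One of these is $1_{S_2\wr S_k}$; the other is the unique nontrivial linear character whose kernel is $S_2\wr A_k$, and this is exactly $\theta_k$, since $\ker\theta_k$ is the preimage of $A_k$ under the projection $S_2\wr S_k\twoheadrightarrow S_k$. Hence $1_{S_2\wr A_k}\ind^{S_2\wr S_k}=1_{S_2\wr S_k}+\theta_k$, and the whole content of the lemma lies in identifying $\theta_k\ind^{S_{2k}}$.

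For the second statement the plan is to pass to symmetric functions. Let $\Lambda$ be the ring of symmetric functions and let $\mathrm{ch}$ denote the Frobenius characteristic map, under which $\chi^\lambda\mapsto s_\lambda$ (the Schur function), $1_{S_m}\mapsto h_m$ and $\sgn_{S_m}\mapsto e_m$. The character $\theta_k$ is the symmetrised wreath-product character $1_{S_2}^{\boxtimes k}\otimes\sgn_{S_k}$, and the standard description of the characters of $S_2\wr S_k$ together with the rule for inducing such characters up to $S_{2k}$ (the characteristic map for wreath products) gives $\mathrm{ch}\bigl(\theta_k\ind^{S_{2k}}\bigr)=e_k[h_2]$, the plethysm of $e_k$ with $h_2=\mathrm{ch}(1_{S_2})$. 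The parallel computation with $1_{S_k}$ in place of $\sgn_{S_k}$ yields $\mathrm{ch}\bigl(1_{S_2\wr S_k}\ind^{S_{2k}}\bigr)=h_k[h_2]$, and since the $f=0$ case of the theorem of Inglis, Richardson and Saxl already identifies this character as $\sum_{\lambda\text{ even}}\chi^\lambda$, the set-up recovers Littlewood's identity $h_k[h_2]=\sum_{\lambda\text{ even}}s_\lambda$ and so passes a reassuring consistency check. The lemma is therefore equivalent to the companion symmetric-function identity $e_k[h_2]=\sum_\alpha s_{2[\alpha]}$, the sum being over all partitions $\alpha$ of $k$ into distinct parts.

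This last identity is the other classical plethysm identity of Littlewood, and in writing out the proof I would either cite it or establish it directly. Summed over $k$ it reads $\prod_{i\le j}(1+x_ix_j)=\sum_\alpha s_{2[\alpha]}$, where for $\alpha=(a_1>\cdots>a_r\ge 1)$ the partition $2[\alpha]$ has Frobenius symbol $(a_1,\dots,a_r\mid a_1-1,\dots,a_r-1)$; this is precisely the statement that the $i$-th principal hook of $2[\alpha]$ has length $2a_i$ while its $i$-th part is $a_i+i$, matching the definition given before the lemma. To prove it one expands $\prod_{i\le j}(1+x_ix_j)$ as a sum of monomials indexed by sets of distinct pairs $\{i,j\}$ with $i\le j$ (loops $x_i^2$ allowed), and applies the standard bialternant or sign-reversing-involution argument. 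The main obstacle is exactly this bookkeeping: one must show that the cancellation leaves behind every $s_{2[\alpha]}$ with multiplicity one and nothing else, equivalently that in the Jacobi--Trudi expansion all Schur functions whose Frobenius arms and legs fail the relation $b_i=a_i-1$ cancel in pairs. I expect this combinatorial verification, rather than the representation-theoretic reduction, to be the delicate part of the argument.
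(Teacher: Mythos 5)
Your proof is correct, but it takes a genuinely different route from the paper's. The paper stays entirely inside the character theory of symmetric groups: writing $\phi_k=\theta_k\ind^{S_{2k}}$, it uses Mackey's lemma to establish the branching identity $\phi_k\res_{S_{2k-1}}=\phi_{k-1}\ind^{S_{2k-1}}$, locates the hook constituent $\chi^{2[k]}=\chi^{(k+1,1^{k-1})}$ (with multiplicity exactly one) by a second Mackey computation against $1_{S_k}\times\sgn_{S_k}\ind^{S_{2k}}$ combined with Pieri's rule, and then argues by induction on $k$ --- with some extra care for constituents of rank $1$ and $2$ --- that the constituents are precisely the $\chi^{2[\alpha]}$, each occurring once; this is a direct adaptation of the James--Saxl proof of the decomposition of $1_{S_2\wr S_k}\ind^{S_{2k}}$ cited in \S 2.4. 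You instead pass through the characteristic map for wreath products, reducing the lemma to the plethysm identity $e_k[h_2]=\sum_\alpha s_{2[\alpha]}$, equivalently $\prod_{i\le j}(1+x_ix_j)=\sum_\alpha s_{2[\alpha]}$ with $2[\alpha]$ of Frobenius symbol $(a_1,\ldots,a_r\mid a_1-1,\ldots,a_r-1)$. Your identification of the characteristic of $\theta_k\ind^{S_{2k}}$ as $e_k[h_2]$ is right, your reading of the Frobenius symbol of $2[\alpha]$ matches the paper's definition, and the identity is genuinely classical (Littlewood; see also Macdonald, \emph{Symmetric Functions and Hall Polynomials}, I.5, Ex.~9), so citing it rather than reproving it is legitimate and disposes of the one step you flag as delicate. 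Your route is shorter and makes the answer look inevitable, at the cost of importing plethysm machinery the paper otherwise avoids; the paper's route is longer but self-contained given the tools (Mackey, Pieri, the branching rule) already in use, and its restriction identity does double duty in pinning down exactly which constituents can occur.
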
}

Before proving Lemma~\ref{Lemma:TwistedPerm} we use it to complete the proof 
of Theorem~\ref{thm:main}. By the first statement in the lemma we have
\[ 1_{S_2 \wr A_k}\Ind^{S_{2k}} = 1_{S_2 \wr S_k}\Ind^{S_{2k}} + 
\theta_k\Ind^{S_{2k}}.\]
Hence, $1_{S_2 \wr A_k}\ind^{S_{2k}}$ fails to be multiplicity-free
if and only if there is an even partition of the form $2[\alpha]$.
Now, 
the partition $2[a_1, \ldots, a_r]$ is even if and only if the 
following conditions hold:
$a_{2i-1}$ is odd and
$a_{2i} = a_{2i-1}-1$ for all $i \le r/2$, and,
if~$r$ is odd, then~$a_r = 1$. It follows, on setting $2b_i = a_{2i-1}-1$, that
$1_{S_2 \wr A_k}\ind^{S_{2k}}$ fails to be multiplicity-free if
and only if there is a strictly decreasing sequence of positive
integers $(b_1, \ldots, b_s)$ such that either
\[ k = 4\sum_{i=1}^s b_i + s \quad \hbox{or} \quad k = 4\sum_{i=1}^s b_i + s + 1. \]
One now shows, by looking
at the possible values of $k$ mod $4$, that provided~$k \ge 25$, at least one 
of these equations has a solution. For example, if~$k = 4l$ with $l \ge 7$,
then one can solve the second equation by taking 
$s = 3$, $b_1 = l-4$, $b_2 = 2$ and $b_3 = 1$.
The bound on $k$ is strict: when $k = 24$, neither
equation is soluble, and hence the permutation character
$1_{S_2 \wr A_{24}}\ind^{S_{48}}$ is multiplicity-free.

Finally we consider the subgroup $\Delta_k$. It is easy to check that
\[ 1_{\Delta_k}\Ind^{S_2 \wr S_k} = 1_{S_2 \wr S_k} + \theta_k \sgn_{S_2 \wr S_k}. \]
Hence
\[ 1_{\Delta_k}\Ind^{S_{2k}} = 1_{S_2 \wr S_k}\Ind^{S_{2k}} + 
\theta_k\Ind^{S_{2k}} \times \sgn_{S_{2k}}.\]
By \eqref{eq:chartwist} and Lemma~\ref{Lemma:TwistedPerm}, we see that
$1_{\Delta_k}\ind^{S_{2k}}$ fails to be
multiplicity-free if and only there is a partition
$2[\alpha]$ whose \emph{conjugate} is even.
The partition $2[a_1, \ldots, a_r]$ has an even conjugate
if and only if $a_{2i-1}$ is even and
$a_{2i} = a_{2i-1}-1$ for all $i \le r/2$, and $r$ is even.
It follows, on setting $2b_i = a_{2i-1}$, that $1_{\Delta_k}\ind^{S_{2k}}$
fails to be multiplicity-free
if and only if there is a strictly decreasing sequence of positive integers
$(b_1,\ldots, b_s)$ such that 
\[ k = 4\sum_{i=1}^s b_i - s .\]
By a very similar argument to before, we now find
that $1_{\Delta_k}\ind^{S_{2k}}$ 
is not multiplicity-free if $k \ge 33$.
Again this bound is strict. 

\subsection*{Proof of Lemma \ref{Lemma:TwistedPerm}}
It is easy to see that $\theta_k$ is the unique non-trivial constituent
of $1_{S_2 \wr A_k}\ind^{S_2 \wr S_k}$. To proceed further,
we adapt the proof 
of the decomposition of $1_{S_2 \wr S_k}\ind^{S_{2k}}$
attributed to James and Saxl in \cite[Example~2.2]{Saxl}.
Given a partition $\lambda$, we define the \emph{rank} of $\lambda$
to be the maximum integer~$r$ such that~$\lambda_r \ge r$. (Thus
the partition $2[a_1,\ldots,a_r]$ has rank $r$.)
Let $\phi_k = \theta_k\ind^{S_{2k}}$. To prove the lemma,
we must show that $\phi_k = \sum \chi^{2[\alpha]}$, where the sum
is over all partitions $\alpha$ of~$k$ with distinct parts.

By
an easy application of Mackey's lemma (see \cite[Theorem~3.3.4]{Benson}) we have
\[ \phi_{k}\Res_{S_{2k-1}} = \phi_{k-1} \Ind^{S_{2k-1}}.\]
It follows by induction that
\begin{equation} \label{Eq:Phi}
\phi_k \Res_{S_{2k-1}} = \sum \chi^{2[\alpha]} \Ind^{S_{2k-1}}
\end{equation}
where the sum is over all partitions $\alpha$ of $k-1$ with distinct parts.
We now calculate, again using Mackey's lemma, that
\begin{align*}
\left< 1_{C_2 \wr A_k} \Ind^{S_{2k}},
1_{S_k} \times \sgn_{S_k} \Ind_{S_k \times S_k}^{S_{2k}} \right>  = 
\left< 1_{C_2 \wr A_k}\Ind^{S_{2k}}\Res_{S_k \times S_k}, 
1_{S_k} \times \sgn_{S_k} \right>_{S_k \times S_k} \\
= \sum_g \left< 1, 1_{S_k} \times \sgn_{S_k} \right>_{(C_2 \wr A_k)^g 
                            \cap (S_k \times S_k)}\hspace{1in} \\
   =   \sum_g 
	\left\{ \begin{array}{c@{\;\; : \;\;}l}1 &  (C_2 \wr A_k)^g \cap (S_k \times S_k) 
	\le S_k \times A_k \hspace{0.33in} \\
												0 & \hbox{otherwise}
									\end{array}\right.
										\\ \ge 1,\hspace{3.2in}
\end{align*}
where in the sums $g$ runs over a set of representatives for
the double cosets of $C_2 \wr A_k$ and $S_k \times S_k$
in $S_{2k}$. 
It follows from Pieri's rule that~$\phi_k$ 
contains either $\chi^{(k+1,1^{k-1})}$ or~$\chi^{(k,1^{k})}$
with positive multiplicity.
From~\eqref{Eq:Phi} we see that
the latter character cannot occur in $\phi_k$, while $\chi^{(k+1,1^{k-1})}$
can occur at most once. Thus 
$\phi_k$ contains $\chi^{2[k]}$ exactly once. 

Suppose now that $\chi^\la$ is a constituent of $\phi_k$. If $\la$ has rank $3$
or more, it follows immediately from \eqref{Eq:Phi} that $\la = 2[\alpha]$
for some $\alpha$. The rank $1$ and rank $2$ possibilities need a little
more care, but in close analogy with Saxl's argument, one can rule out the appearance
of any unwanted characters by using the known occurrence of $\chi^{2[k]}$.
Finally, suppose that $\alpha = (a_1, \ldots, a_r)$ 
is a partition of $k$
with distinct parts 
and that $\chi^{2[\alpha]}$ does not appear in~$\phi_k$. Then,
if~$\mu$ is the partition obtained from $2[\alpha_1, \ldots, \alpha_r]$ by
removing a node from row~$r$,
$\chi^{\mu}$ does not appear in $\phi_k\res_{S_{2k-1}}$, in contradiction
to \eqref{Eq:Phi}.$\qed$

\subsection{Corollaries}
Working through the cases in Theorem~\ref{thm:main}
we get a complete list of
all the irreducible characters of symmetric groups
that can be obtained as a constituent of
a multiplicity-free permutation representation.

\begin{corollary}\label{cor}
Let $n \ge 66$ and let $\lambda$ be a partition of $n$. 
The irreducible character $\chi^\la$ is a constituent of
a multiplicity-free permutation character of~$S_n$ if and only if 
(at least) one of:

\begin{thmlist}
\item[\emph{(1)}] $\la$ has at most two rows or at most two columns;

\item[\emph{(2)}] $\la = (n-i,1^i)$
for some $i$ with $0 \le k < n$;

\item[\emph{(3)}] $\la=(2k-i,i,1)'$ where $n=2k$ and
$1 \le i \le k$;

\item[\emph{(4)}] $\la$ has at most one row of odd length;

\item[\emph{(5)}] $\la$ has columns all of even length and $n \equiv 2$ mod $4$;

\item[\emph{(6)}] $\la$ can be obtained by adding nodes to one of the following partitions
\[ (2,2,1), (2,2,2), (5,1^4), (4,4,1), (3,2,2,2) \]
subject to the restriction that all added nodes are in different columns;

\item[\emph{(7)}]
$\la$ can be obtained by adding nodes to one of the following partitions
\[ (3,2), (3,3), (2,2,1), (2,2,2), (5,1^4), (4,4,1), (3,2,2,2) \]
subject to the restriction that all added nodes are in different rows.\quad$\qed$
\end{thmlist}
\end{corollary}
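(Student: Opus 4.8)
The plan is to deduce Corollary~\ref{cor} as a direct consequence of the classification in Theorem~\ref{thm:main}. An irreducible $\chi^\lambda$ is a constituent of \emph{some} multiplicity-free permutation character of $S_n$ if and only if it occurs in $1_G\ind^{S_n}$ for one of the subgroups $G$ listed there, and in each of the cases (a)--(e) the full decomposition of $1_G\ind^{S_n}$ has already been recorded in the proof of that theorem. So the whole corollary reduces to collecting these constituents and verifying that their union is precisely the family of partitions described by (1)--(7). Multiplicity plays no role: Theorem~\ref{thm:main} guarantees each character is multiplicity-free, so I need only decide \emph{whether} each $\chi^\lambda$ occurs.

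First I would dispose of the routine cases (a) and (b1). Equation~\eqref{eq:altalt}, together with the explicit characters displayed in~\S2.2 and equations~\eqref{eq:twowreath}, \eqref{eq:psi}, shows that every constituent arising there is a two-row character $\chi^{(n-i,i)}$, a two-column character $\chi^{(n-i,i)'}$ or a hook $\chi^{(n-i,1^i)}$; as $i$ and the various $k$ range these give exactly conditions (1) and (2). For the odd case (b2) I would induce the $S_{2k}$-characters of $S_k \wr S_2$, $A_{2k}\cap(S_k\wr S_2)$ and $\Gamma_k$ up to $S_{2k+1}$ using the branching rule. Adding a node to the summands $\chi^{(2k-2i,2i)}$ yields the three-row partitions $(2k-2i,2i,1)$, all of which have at most one odd row and so are already accounted for by condition (4); adding a node to the conjugate summands, and to the terms $\chi^{(2k-2i-1,2i+1)'}$ contributed by $\Gamma_k$, yields the three-column partitions $\chi^{(2k-i,i,1)'}$ for every $i$ with $1\le i\le k$ (the even values of $i$ coming from $A_{2k}\cap(S_k\wr S_2)$ and the odd values from $\Gamma_k$). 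This is condition (3), with $n=2k+1$.

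Next I would treat the wreath cases (c) and (d). By the theorem of Inglis, Richardson and Saxl the character of case (c) is the sum of all $\chi^\lambda$ with at most one odd part, which is condition (4): for even $n$ the number of odd parts is even and this forces an even partition, while for odd $n$ it forces exactly one odd part. Case (d) occurs only when $k$ is odd, i.e. when $n\equiv 2\pmod 4$, and equation~\eqref{eq:collide} shows that the constituents there beyond the even partitions already recorded in (4) are precisely their conjugates, the partitions all of whose columns have even length; this is condition (5), whose congruence restriction matches the hypothesis on $k$ exactly. Finally, in case (e) I would apply Young's rule to the untwisted characters $1_{G_k\times S_{n-k}}\ind^{S_n}$ of~\eqref{eq:PGammaL} and the two displays preceding it: adding nodes in distinct columns to the nontrivial constituents $(2,2,1)$, $(2,2,2)$, $(5,1^4)$, $(4,4,1)$, $(3,2^3)$ gives condition (6). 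Applying Pieri's rule to the sign-twisted summands of $1_{G_k\times A_{n-k}}\ind^{S_n}$, and using~\eqref{eq:chartwist} to conjugate those of $1_{A_n\cap(G_k\times S_{n-k})}\ind^{S_n}$, keeps the bases $(2,2,1),(2,2,2)$ but also produces their conjugates $(3,2),(3,3)$, while the three $G_9$-bases are stable as a set under conjugation; adding nodes in distinct rows to this enlarged list of bases gives condition (7).

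The reverse inclusion is then automatic, since every partition listed in (1)--(7) has been produced as a genuine constituent above. I expect the only real care to be needed in the bookkeeping around conjugation by the sign character: one must check that in case (b2) the three-row partitions are absorbed into (4) so that only their transposes need the separate clause (3), and that in case (e) the switch from adding nodes in distinct columns to adding them in distinct rows transposes the $G_5$- and $G_6$-bases into $(3,2)$ and $(3,3)$ while fixing the set of $G_9$-bases. Keeping the parity side-conditions in (3) and (5) tied to the hypotheses \emph{$n$ odd} and \emph{$k$ odd} of the governing cases is the other place where a slip would be easy.
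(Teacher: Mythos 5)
Your proposal is correct and follows essentially the same route as the paper: the corollary is deduced by collecting, case by case, the explicit constituents of the characters $1_G\ind^{S_n}$ computed in the proof of Theorem~\ref{thm:main}, with cases (a) and (b1) yielding (1)--(2), case (b2) yielding the three-column family (3) (its three-row companions being absorbed into (4)), cases (c) and (d) yielding (4) and (5), and case (e) yielding (6) and (7) via Young's and Pieri's rules together with conjugation by the sign character. The paper's own proof is just a terser version of this same bookkeeping.
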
 
\begin{proof}
Cases (1) and~(2) give the characters coming from 
cases (a1), (a2) and~(b1) of Theorem~\ref{thm:main}.
If $n = 2k+1$ then it follows
from the explicit calculations in \S 2.2 that
 the groups in case (b2)
contribute the further characters with labels $(2k-2i, 2i, 1)$, $(2k-2i,2i,1)'$
for $1 \le i \le k/2$
and $(2k-2i-1,2i+1,1)'$ for $0 \le i \le (k-1)/2$. The first family
is subsumed by case~(4); the others form case~(3).
The remaining cases are straightforward: case~(4) comes directly
from~(c), case~(5) from~(d) and cases~(6) and~(7) from~(e).
\end{proof}


The following immediate corollary of Theorem~\ref{thm:main} is also
of interest.

\begin{corollary}
Let $n \ge 66$.
Suppose that  $G$ is a subgroup of $S_n$ such that the permutation
character of $S_n$ acting on the cosets of $G$ is multiplicity-free.
If the permutation character of $S_{n+1}$ acting on the cosets
of $G$ is also multiplicity-free then \emph{either}
$G \ge A_n$ \emph{or}
$n=2k$ is even and either
$G = S_2 \wr S_k$ or
$G$ is a transitive subgroup of $S_k \wr S_2$ of index 
at most $2$. \quad$\Box$
\end{corollary}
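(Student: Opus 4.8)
The plan is to apply Theorem~\ref{thm:main} in degree $n+1$ and then retain only those subgroups consistent with the hypothesis $G \le S_n$. I would regard $S_n$ as the stabilizer of the symbol $n+1$ inside $S_{n+1}$, so that $G$ fixes the point $n+1$ and $1_G\ind^{S_{n+1}} = (1_G\ind^{S_n})\ind^{S_{n+1}}$. A preliminary remark is that the first hypothesis is in fact redundant: by the branching rule, if some $\chi^\lambda$ occurred with multiplicity at least $2$ in $1_G\ind^{S_n}$, then every $\chi^\mu$ with $\mu$ obtained from $\lambda$ by adding a single node would occur with multiplicity at least $2$ in $1_G\ind^{S_{n+1}}$. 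Hence multiplicity-freeness in degree $n+1$ already forces it in degree $n$, and it suffices to work from the single hypothesis that $1_G\ind^{S_{n+1}}$ is multiplicity-free.

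Since $n+1 \ge 67 \ge 66$, Theorem~\ref{thm:main} applies in degree $n+1$ and places $G$ in one of its cases~(a1)--(e), read with $n$ replaced throughout by $n+1$. The decisive extra constraint is that $G$, being contained in $S_n$, must fix a point of $\{1,\ldots,n+1\}$, and I would run through the cases discarding every subgroup with no fixed point. In cases~(a2),~(b1) and~(e) the group has two orbits each of size at least $3$; in cases~(c) and~(d) with $n+1 = 2k$ (that is, with $n$ odd) the group is transitive on all $2k$ points. None of these can fix a point. In case~(a1) the only point-fixing subgroups are those with $A_n \le G \le S_n$, which yield the alternative $G \ge A_n$.

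The surviving contributions come from the two places where a subgroup of $S_{2k}$ is viewed inside $S_{2k+1}$ and therefore automatically fixes the adjoined symbol. Writing $n+1 = 2k+1$, so that $n = 2k$ is even: case~(c) supplies $S_2 \wr S_k \le S_{2k}$, which fixes the point $n+1$; and the second alternative of case~(b2) supplies the subgroups of $S_k \wr S_2 \le S_{2k}$ of index at most $2$ other than $S_k \times S_k$, each of which fixes $n+1$. The one identification worth recording here is that, among the index-$\le 2$ subgroups of $S_k \wr S_2$, the base group $S_k \times S_k$ is the unique \emph{intransitive} one, so that excluding it leaves precisely the transitive subgroups of index at most $2$, namely $S_k \wr S_2$, $A_{2k} \cap (S_k \wr S_2)$ and $\Gamma_k$. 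Collecting the point-fixing subgroups from all of the cases now yields exactly the three stated alternatives.

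The argument is essentially bookkeeping, so I do not expect a serious obstacle. The only point requiring any care is the observation that cases~(c) and~(b2) of the degree-$(n+1)$ classification contribute through subgroups of $S_{2k}$ fixing the adjoined symbol $2k+1$, together with the reformulation of ``index at most $2$ other than $S_k \times S_k$'' as ``transitive of index at most $2$''.
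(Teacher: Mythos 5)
Your argument is correct and is exactly the route the paper intends: the corollary is stated there as an immediate consequence of Theorem~2, obtained by applying the classification in degree $n+1$ and discarding every listed subgroup that cannot fix the point $n+1$, leaving only $A_n \le G \le S_n$ from case~(a1), $S_2 \wr S_k$ from case~(c) with $n+1=2k+1$, and the transitive index-$\le 2$ subgroups of $S_k \wr S_2$ from case~(b2). Your additional observations --- that the degree-$n$ hypothesis is redundant by the branching rule, and that excluding $S_k \times S_k$ is the same as requiring transitivity --- are both correct and are worth recording.
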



\section{Proof of Theorem~\ref{thm:mod}}
We begin by collecting 
the background results we need for the proof of Theorem~\ref{thm:mod}.
We shall distinguish between inner tensor products,
denoted~$\otimes$,
and outer tensor products, denoted~$\boxtimes$. 

\begin{lemma}\label{lemma:sgnfilt}
Let $F$ be a field.
If $U$ is a module for $FS_n$ with a Specht filtration
then $U \otimes sgn$ has a filtration by the duals of Specht modules.
In particular, 
if $M$ is a self-dual module for $FS_n$ with a Specht filtration,
then $M \otimes \sgn$ also has a Specht filtration.
\end{lemma}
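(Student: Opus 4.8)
The plan is to prove the first, more general statement and then deduce the special case. The key tool is the standard fact that dualizing is compatible with twisting by the sign representation, together with the behaviour of Specht modules under the sign twist. Recall that for any $FS_n$-module $V$ one has a natural isomorphism $(V \otimes \sgn)^* \cong V^* \otimes \sgn$, since $\sgn$ is one-dimensional and self-dual. I would also use the well-known fact that for the integral Specht module one has $(S_\Z^\lambda)^* \otimes \sgn \cong S_\Z^{\lambda'}$ over any field, so that $S^\lambda \otimes \sgn$ is isomorphic to the dual $(S^{\lambda'})^*$ of the Specht module labelled by the conjugate partition. This is the module-theoretic analogue of the character identity \eqref{eq:chartwist} used repeatedly in \S 2.

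First I would establish the general claim. Suppose $U$ has a Specht filtration
\[ 0 = U_0 \subset U_1 \subset \cdots \subset U_r = U \]
with $U_j / U_{j-1} \cong S^{\lambda(j)}$ for partitions $\lambda(j)$ of $n$. Applying the exact functor $- \otimes \sgn$ (twisting by a one-dimensional module is exact and preserves inclusions) yields a chain
\[ 0 = U_0 \otimes \sgn \subset \cdots \subset U_r \otimes \sgn = U \otimes \sgn \]
whose successive quotients are $(U_j/U_{j-1}) \otimes \sgn \cong S^{\lambda(j)} \otimes \sgn$. By the displayed sign-twist isomorphism each such quotient is isomorphic to the dual Specht module $(S^{\lambda(j)'})^*$. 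Thus $U \otimes \sgn$ has a filtration whose quotients are duals of Specht modules, which is exactly the first assertion.

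Next I would deduce the "in particular" clause. If $M$ is self-dual, so $M \cong M^*$, then I want to show $M \otimes \sgn$ has an honest Specht filtration rather than merely a dual-Specht filtration. The idea is to dualize the filtration just constructed. Taking $F$-duals of the chain for $M \otimes \sgn$ reverses inclusions and replaces each quotient by its dual, turning a filtration with quotients $(S^{\lambda(j)'})^*$ into a filtration of $(M \otimes \sgn)^*$ with quotients $S^{\lambda(j)'}$ read in the reverse order; this is a genuine Specht filtration. It then remains to identify $(M \otimes \sgn)^*$ with $M \otimes \sgn$: using the natural isomorphism $(M \otimes \sgn)^* \cong M^* \otimes \sgn$ together with $M \cong M^*$ gives $(M \otimes \sgn)^* \cong M^* \otimes \sgn \cong M \otimes \sgn$, as required.

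The only delicate point, and the one I would state carefully rather than skip, is the compatibility of $F$-duality with the filtration: dualizing a submodule chain indeed produces a chain of submodules of the dual module with quotients the duals of the original quotients (taken in reverse order), and one must invoke the self-duality of $M$ at exactly this juncture to convert a filtration by duals of Specht modules into a Specht filtration. I do not expect any genuine obstacle here, since both the exactness of the sign twist and the sign-twist formula $(S^\lambda_\Z)^* \otimes \sgn \cong S^{\lambda'}_\Z$ are standard; the lemma is essentially a formal consequence of these facts, and its role is to package them for use in the main proof of Theorem~\ref{thm:mod}.
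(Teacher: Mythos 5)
Your proposal is correct and follows essentially the same route as the paper: the paper's proof consists precisely of citing the isomorphism $S^{\lambda} \otimes \sgn \cong (S^{\lambda'})^\star$ (James, Theorem 8.15) together with the exactness of the sign twist, leaving the reader to fill in exactly the filtration-transport and dualization steps you spell out. Your more careful treatment of the ``in particular'' clause --- dualizing the filtration and invoking $(M \otimes \sgn)^\star \cong M^\star \otimes \sgn \cong M \otimes \sgn$ --- is the intended argument, just made explicit.
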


\medskip
\noindent\emph{Proof.}
By  \cite[Theorem~8.15]{James}, if $\lambda$
is any partition then
\begin{equation}
\label{eq:modtwisted} 
S^{\lambda} \otimes \sgn \cong \bigl( S^{\lambda'} \bigr)^\star. 
\end{equation}
(This is the modular version of \eqref{eq:chartwist} above.)
Since the functor sending an~$FS_n$-module $U$ to $U \otimes \sgn$ is clearly exact, this 
is all we need
to prove the lemma.$\qed$

\medskip

\begin{theorem}\label{thm:JamesYoung}
Let $F$ be a field and let $n > k \ge 1$. If $\lambda$ is a partition
of~$k$ then
\[ S^\lambda \boxtimes F_{S_{n-k}} \Ind_{S_{k} \times S_{n-k}}^{S_n} \]
has a Specht filtration, with the Specht factors 
given by Young's rule. Similarly
\[ S^\lambda \boxtimes \sgn_{S_{n-k}} \Ind_{S_{k} \times S_{n-k}}^{S_n} \]
has a Specht filtration, with the 
Specht factors given by Pieri's rule. 
\end{theorem}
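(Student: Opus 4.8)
The plan is to handle the two statements separately: the first is the modular form of Young's rule and I would cite James for it, while the second I would derive from the first by twisting with the sign character and then dualising. The interplay of these two operations is exactly what converts Young's rule into Pieri's rule, so the entire content of the second statement is this piece of bookkeeping.

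For the first statement, the module $S^\lambda \boxtimes F_{S_{n-k}}\Ind_{S_k\times S_{n-k}}^{S_n}$ is the one to which James's treatment of Young's rule applies directly; see \cite[Chapter~17]{James}. James constructs a filtration whose successive quotients are the Specht modules $S^\kappa$, one for each partition $\kappa$ obtained from $\lambda$ by adding $n-k$ nodes with no two in the same column, that is, with $\kappa/\lambda$ a horizontal strip. Because a horizontal strip is determined by its inner and outer shapes, each such $\kappa$ occurs exactly once, in agreement with the multiplicities in Proposition~\ref{prop:Youngsrule}(i). (If one prefers to start from the form in which James phrases the result, for a Young permutation module $M^\mu$, the general case follows by standard arguments from the exactness of induction; for present purposes I would simply cite James.)

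For the second statement, fix a partition $\lambda$ of $k$ and start from the Specht filtration of $S^\lambda \boxtimes F_{S_{n-k}}\Ind_{S_k\times S_{n-k}}^{S_n}$ just obtained, with factors $S^\kappa$ as $\kappa/\lambda$ runs over the horizontal strips. Tensoring with $\sgn$ and applying Lemma~\ref{lemma:sgnfilt} produces a filtration by duals of Specht modules; by \eqref{eq:modtwisted} its factors are $(S^{\kappa'})^\star$, and as $\kappa/\lambda$ runs over horizontal strips the conjugates $\kappa'$ run over the vertical strips built on $\lambda'$. On the other side, since tensoring an induced module with a linear character may be carried inside the induction and $\sgn_{S_n}$ restricts to $\sgn_{S_k}\boxtimes\sgn_{S_{n-k}}$, this twisted module is $(S^\lambda\otimes\sgn_{S_k})\boxtimes\sgn_{S_{n-k}}\Ind_{S_k\times S_{n-k}}^{S_n}=(S^{\lambda'})^\star\boxtimes\sgn_{S_{n-k}}\Ind_{S_k\times S_{n-k}}^{S_n}$, again by \eqref{eq:modtwisted}. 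I would now dualise: induction commutes with the $F$-dual (induction and coinduction coincide for finite groups), $\boxtimes$ and $\sgn$ are compatible with $\star$, and $(S^{\lambda'})^{\star\star}\cong S^{\lambda'}$, so the $F$-dual of this module is $S^{\lambda'}\boxtimes\sgn_{S_{n-k}}\Ind_{S_k\times S_{n-k}}^{S_n}$. Dualising reverses the filtration and replaces each factor $(S^{\kappa'})^\star$ by $S^{\kappa'}$, giving a genuine Specht filtration whose factors $S^{\kappa'}$ are indexed by the vertical strips on $\lambda'$, exactly as in Pieri's rule, Proposition~\ref{prop:Youngsrule}(ii). As conjugation is a bijection on partitions of $k$, relabelling $\lambda'$ as the inducing partition gives the stated result in general.

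The routine ingredients here, namely exactness of induction, compatibility of $\Ind$, $\boxtimes$ and $\otimes$ with the $F$-dual, and self-duality of $\sgn$, are all standard, so the only real care is the bookkeeping of the previous paragraph, and that is the step I expect to be the main obstacle. The point is that the sign twist does not land on Specht modules but on their duals, and at the same time conjugates every partition, so one cannot pass from Young's rule to Pieri's rule by conjugation alone. The subsequent dualisation is essential both to convert the dual Specht factors back into genuine Specht modules and, because dualising reverses the order of a filtration, to recover an honest Specht filtration. Keeping the two conjugations straight, one coming from the sign twist on each factor via \eqref{eq:modtwisted} and the other from dualising the whole module, so that horizontal strips on $\lambda$ turn into vertical strips on $\lambda'$, is where a transpose error is most likely to slip in.
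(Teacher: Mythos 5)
Your proposal is correct and follows the same route as the paper, which simply cites James's modular Young's rule (Corollary~17.14 of \cite{James}) for the first statement and says the second ``may be deduced from the first by using~\eqref{eq:modtwisted}''. Your careful unwinding of that deduction --- sign-twisting to get a filtration by duals of Specht modules, then dualising the whole module to convert these back into Specht modules while turning horizontal strips on $\lambda$ into vertical strips on $\lambda'$ --- is exactly the bookkeeping the paper leaves implicit.
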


\noindent
\emph{Proof.}
The first statement 
follows from
James' modular version of Young's rule~\cite[Corollary~17.14]{James}.
The second may be deduced from the first by using~\eqref{eq:modtwisted}.
$\qed$

\medskip
Since the functor sending an $FS_k$-module $U$ to 
$U \boxtimes F_{S_{n-k}}\ind^{S_n}$ is exact, 
it follows from Theorem~\ref{thm:JamesYoung}
that if $U$ is an $FS_k$-module with a Specht filtration
then $U \boxtimes F_{S_{n-k}}\ind^{S_n}$ also has a Specht
filtration, with the Specht factors given by repeated
applications of Young's rule. Naturally there is a similar
result for $U \boxtimes \sgn_{S_{n-k}}\ind^{S_n}$.

It remains to state two results concerning
summands of permutation modules. Both of these have
a slightly technical flavour, but neither is at all difficult
to apply.

\begin{lemma}\label{lemma:selfdual} Let $F$ be a field
of prime characteristic $p$.
If $G$ is a subgroup of~$S_n$ such that the permutation
character $1_G \ind^{S_n}$ is multiplicity-free then
all the summands of $F \ind_G^{S_n}$ are self-dual.
\end{lemma}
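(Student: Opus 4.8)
The plan is to exploit the connection between self-duality of summands of a permutation module and the symmetry of the associated permutation character. First I would recall the general fact that a permutation module $F\!\ind_G^{S_n}$ is always self-dual, since a transitive permutation module over any field carries a nondegenerate symmetric $G$-invariant bilinear form coming from the permutation basis. The module $F\!\ind_G^{S_n}$ therefore decomposes as a direct sum of indecomposable summands, and I want to show each individual summand is self-dual.

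The key point is that the decomposition of $F\!\ind_G^{S_n}$ into indecomposable summands corresponds, via the ordinary character, to a partition of the constituents of $1_G\!\ind^{S_n}$. The crucial use of the multiplicity-free hypothesis is this: if $\pi = 1_G\!\ind^{S_n}$ is multiplicity-free, then each ordinary irreducible $\chi^\lambda$ occurring in $\pi$ does so exactly once, so the ordinary characters of distinct summands (over a field of characteristic $0$, i.e.\ after lifting) have no common constituent. More usefully in characteristic $p$, I would argue that the endomorphism algebra $\End_{FS_n}(F\!\ind_G^{S_n})$ is commutative. Indeed, its dimension equals the rank of the permutation action, and multiplicity-freeness of the ordinary character forces this centralizer algebra to be abelian (this is exactly the remark the paper makes after the statement of Theorem~\ref{thm:main}, citing \cite[Theorem~3.5]{CameronPermGps}). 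A commutative endomorphism algebra means the primitive idempotents are uniquely determined and the indecomposable summands are pairwise non-isomorphic.

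Next I would combine commutativity with self-duality of the whole module. Write $M = F\!\ind_G^{S_n} = \bigoplus_i M_i$ as a sum of indecomposable summands, pairwise non-isomorphic by the previous step. The self-dual bilinear form on $M$ induces an isomorphism $M \cong M^\star \cong \bigoplus_i M_i^\star$. Since the $M_i$ are indecomposable and pairwise non-isomorphic, and since $M_i^\star$ is also indecomposable, the Krull--Schmidt theorem forces each $M_i^\star$ to be isomorphic to some $M_j$. The duality $\star$ thus permutes the isomorphism classes of summands; I would then show this permutation is trivial. The point is that $M_i$ and $M_i^\star$ have the same Brauer character (duality fixes Brauer characters of self-dual-up-to-class modules over such fields, or one argues that $M_i$ and $M_i^\star$ have identical composition factors), so the only way the commutative, multiplicity-free structure is consistent is if $M_i \cong M_i^\star$ for every $i$.

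The step I expect to be the main obstacle is the last one: ruling out a nontrivial pairing $M_i \leftrightarrow M_j$ with $j \neq i$ under $\star$. The clean way around this is to observe that the block idempotents and the primitive idempotents of the commutative algebra $\End_{FS_n}(M)$ are defined over the prime field up to the action of the relevant Galois/duality involution, and that the involution $\star$ acts on the centralizer algebra as an anti-automorphism which, by commutativity, is in fact an automorphism fixing the identity and hence fixing each primitive idempotent. This fixes each $M_i$ as an isomorphism class and yields $M_i \cong M_i^\star$, completing the proof. If that idempotent-fixing argument proves delicate to make rigorous over a non-perfect field, the fallback is to invoke that $F$ is algebraically closed (as in the hypotheses of Theorem~\ref{thm:mod}), so that Brauer characters separate indecomposable summands of a module with commutative endomorphism ring, forcing $\star$ to act trivially on the set of summands.
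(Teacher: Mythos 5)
Your setup is fine up to the last step: the centralizer algebra $\End_{FS_n}(F\ind_G^{S_n})$ is indeed commutative (its orbital basis has integer structure constants, so commutativity descends from the characteristic-zero case), the indecomposable summands $M_i$ are therefore pairwise non-isomorphic, and Krull--Schmidt shows that $\star$ permutes their isomorphism classes. The gap is in ruling out a transposition $M_i \leftrightarrow M_j$ with $i \neq j$, and neither of your two justifications works. First, an automorphism of a commutative algebra fixes the identity but need \emph{not} fix each primitive idempotent: the swap automorphism of $F \times F$ fixes $(1,1)$ and exchanges $(1,0)$ with $(0,1)$. Showing that the adjoint involution fixes each $e_i$ is exactly equivalent to the statement you are trying to prove, so nothing has been gained. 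Second, Brauer characters cannot separate $M_i$ from $M_i^\star$: a module and its dual always have the same composition factors, hence the same Brauer character, so ``Brauer characters separate the summands'' fails precisely in the situation you need to exclude (and is false in general for modules with commutative endomorphism ring).

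The paper closes this gap by lifting to characteristic zero, where multiplicity-freeness bites directly. Each indecomposable summand $U$ of $F\ind_G^{S_n}$ lifts to a $\mathbf{Z}_pS_n$-lattice summand $V$ of $\mathbf{Z}_p\ind_G^{S_n}$ with $V \otimes_{\mathbf{Z}_p} F \cong U$; since all ordinary characters of $S_n$ are rational, $V$ and $V^\star$ afford the same ordinary character. If $U$ (hence $V$) were not self-dual, then $V$ and a summand isomorphic to $V^\star$ would be two \emph{distinct} summands of the self-dual lattice $\mathbf{Z}_p\ind_G^{S_n}$ affording equal characters, so $1_G\ind^{S_n}$ would contain some $\chi^\lambda$ at least twice --- a contradiction. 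If you prefer to stay in characteristic $p$, your anti-automorphism idea can be rescued, but only by feeding in more than commutativity: for $S_n$, multiplicity-freeness forces every orbital to be self-paired (because the character table is real), so the adjoint involution with respect to the standard permutation form is the \emph{identity} on the orbital basis of the centralizer algebra; then every primitive idempotent is self-adjoint, the form restricts non-degenerately to each $e_iM$, and each summand is self-dual. Either way, some input beyond ``commutative endomorphism ring'' is indispensable.
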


\medskip
\noindent\emph{Proof.}
For simplicity, we assume that $F$ is the field with $p$ elements.
Let $U$ be an indecomposable direct summand of $F \ind_G^{S_n}$.
Let $\mathbf{Z}_p$
denote the ring of $p$-adic integers. 
Since~$U$ is a direct summand of a permutation module, we may
lift $U$ to a~$\mathbf{Z}_pS_n$-module~$V$ such that~$V$ 
is a direct summand of~$\mathbf{Z}_p\ind_G^{S_n}$
and~$V \otimes_{\mathbf{Z}_p} F = U$. 
(See \cite[\S 3.11]{Benson} for an outline of this
lifting process.)

Suppose that $U$ is not self-dual. Then the lifted module
$V$ is not self-dual either. Since $\mathbf{Z}_p \ind_G^{S_n}$
is self-dual, we may find a summand~$V'$
of~$\mathbf{Z}_p \ind_G^{S_n}$ such that $V' \cong V^\star$.
As $V$ and $V'$ are non-isomorphic, they are distinct 
summands of~$\mathbf{Z}_p \ind_G^{S_n}$.
But $V$ and $V^\star$ have the same 
ordinary character. This
contradicts our assumption that the character 
$1_G \ind^{S_n}$ is multiplicity-free.$\qed$

\medskip
This lemma deals with the assertions about
duality in Theorem~\ref{thm:mod}. It may
also be used to replace the reference
to the author's D.~Phil thesis \cite[Theorem~6.5.1]{WildonDPhil}
in the proof of Theorem~4 of \cite{PagetBrauer}.

Finally, we shall often be in the position of knowing that
a permutation module $F \ind_G^{S_n}$ 
has a Specht filtration, and
wishing to prove that the same result holds for 
each of its summands. Since in Theorem~\ref{thm:mod} 
we assume that our
ground field~$F$ is algebraically closed
and of characteristic $> 3$, 
we may use the homological algebra approach developed by 
Hemmer and Nakano in~\cite{HN}. (For an alternative, slightly
less technological approach, see the remark attributed to~S.~Donkin at the end of \S1 of \cite{PagetBrauer}.)

\vbox{
\begin{proposition}\label{prop:summands}
Let $F$ be an algebraically closed field of prime characteristic $p > 3$.
Let $M$ be a module for $FS_n$
with a Specht filtration. If $U$ is a direct
summand of $M$ then $U$ also has a Specht filtration.
\end{proposition}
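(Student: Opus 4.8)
The plan is to reduce the statement to a cohomological criterion for the existence of a Specht filtration, of the kind developed by Hemmer and Nakano in~\cite{HN}; the hypotheses that $F$ is algebraically closed of characteristic $p > 3$ are imposed precisely so that their machinery is available. The governing idea is that, in this range of characteristic, the Specht modules $S^\lambda$ and the dual Specht modules $(S^\mu)^\star$ of~\eqref{eq:modtwisted} play the roles of the standard and costandard objects of a highest weight category. From the attendant $\Ext$-orthogonality relations one extracts test modules $T_\mu$, indexed by the partitions~$\mu$ of~$n$ and built from the dual Specht modules, such that an $FS_n$-module $N$ admits a Specht filtration if and only if $\Ext^1_{FS_n}(N, T_\mu) = 0$ for every~$\mu$.

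Granting such a criterion, the remainder of the argument is almost formal, and this is the only part I would record in detail. Writing $M = U \oplus U'$ and using that $\Ext^1_{FS_n}(-, T_\mu)$ is an additive functor, one has
\[ \Ext^1_{FS_n}(M, T_\mu) \cong \Ext^1_{FS_n}(U, T_\mu) \oplus \Ext^1_{FS_n}(U', T_\mu) \]
for every partition~$\mu$ of~$n$. Since $M$ has a Specht filtration, the criterion makes the left-hand side vanish for all~$\mu$, forcing each summand on the right to vanish as well. In particular $\Ext^1_{FS_n}(U, T_\mu) = 0$ for all~$\mu$, so the criterion applies in the reverse direction to furnish $U$ with a Specht filtration.

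The substance of the proof, and the only real obstacle, lies in the cohomological criterion itself rather than in this additivity step. The difficulty is that, unlike the Schur algebra $S(n,n)$, the group algebra $FS_n$ is not quasi-hereditary, so the clean highest-weight-category argument cannot be applied to $FS_n$-modules directly; one must instead transport the standard and costandard formalism across the Schur functor and control the information lost in doing so. It is exactly here that the restriction $p > 3$ enters, since for $p = 2$ and $p = 3$ the relevant orthogonality fails---indeed, as we show in~\S 4, permutation modules with no Specht filtration then occur. As~\cite{HN} carries out this analysis in full, I would simply invoke their result and check that our modules satisfy its hypotheses.
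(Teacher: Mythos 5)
Your argument is correct and follows essentially the same route as the paper, which simply cites \cite[Theorem~3.6.1]{HN} at this point; the cohomological criterion and the additivity of $\Ext^1_{FS_n}(-,T_\mu)$ that you spell out are exactly the content packaged inside that reference. No gap to report.
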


\noindent
\emph{Proof.}
This is immediate from~\cite[Theorem~3.6.1]{HN}.$\qed$
}

\medskip
We are now ready to prove Theorem~\ref{thm:mod}. By the
work of Hemmer and Nakano \cite{HN}, when $p > 3$, the multiplicities
of the factors in a Specht filtration are well-defined. Hence
it suffices to show that each of the permutation modules
in Theorem~\ref{thm:mod} has a Specht filtration, with the 
Specht factors given by its ordinary character.

We start
with the modules coming from case~(a) of Theorem~\ref{thm:main}. 
Suppose that $G = A_k \times A_{n-k}$.
Since the ground field~$F$ 
has odd characteristic,
\[ \begin{split} F \Ind_{A_k \times A_{n-k}}^{S_n} = 
\bigl( F_{S_k} \boxtimes F_{S_{n-k}} \bigr) \Ind^{S_n}
\oplus 
\bigl( \sgn_{S_k} \boxtimes F_{S_{n-k}} \bigr) \Ind^{S_n} \qquad\qquad\qquad\qquad \\
\qquad\qquad\oplus
\Bigl( \bigl( F_{S_k} \boxtimes F_{S_{n-k}} \bigr) \Ind^{S_n} \otimes \sgn_{S_n} \Bigr)
\oplus 
\Bigl( \bigl( \sgn_{S_k} \boxtimes F_{S_{n-k}} \bigr) \Ind^{S_n} \otimes \sgn_{S_n}
\Bigr).
\end{split}\]
It follows from Lemma~\ref{lemma:sgnfilt}
and Theorem~\ref{thm:JamesYoung} that
each of the four summands 
has a Specht filtrations. Proposition~\ref{prop:summands}
now guarantees that \emph{any} indecomposable summand 
of~$F \ind_{A_k \times A_{n-k}}^{S_n}$ has a Specht filtration.
This deals with all the subgroups appearing in case (a),
and also the subgroups of $S_{k+1} \times S_k$ in case (b2).

Now suppose that $n=2k$. We first note that
\[ F\Ind_{S_k \times S_k}^{S_{2k}} = F\Ind_{S_k \wr S_2}^{S_{2k}}
\oplus \psi_k \Ind_{S_k\wr S_2}^{S_{2k}}\] 
where $\psi_k$ is the $1$-dimensional representation of
$S_k \wr S_2$ defined in \S 2.2.
Hence both of the summands have a Specht filtration.
(Here it is essential
that~$F$ has odd characteristic: see \S 4.2 for
an example when $F$ has characteristic~$2$.)
It follows that
\[ F\Ind_{A_{2k} \cap (S_k \wr S_2)}^{S_{2k}}
= F \Ind_{S_k \wr S_2}^{S_{2k}} \oplus \left( F \Ind_{S_k \wr S_2}^{S_{2k}} \right)
\otimes \sgn_{S_{2k}}
\]
and
\[ F\Ind_{\Gamma_k}^{S_{2k}} = 
 F \Ind_{S_k \wr S_2}^{S_{2k}} \oplus 
 \left( \psi_k \Ind_{S_k \wr S_2}^{S_{2k}} \right) \otimes \sgn_{S_{2k}}\]
both have Specht filtrations. 
Moreover, 
if $G$ is a subgroup of $S_k \wr S_2$ of index~$4$ then
$F \ind_G^{S_{2k}}$ is in every case
a direct summand of $F\ind_{A_k\times A_k}^{S_{2k}}$,
and so has a Specht filtration.
Hence if $G$ is any subgroup
of $S_{2k}$ such that
 $A_k \times A_k < G \le S_k \wr S_2$, then $F\ind_G^{S_{2k}}$
has a Specht filtration. By
Theorem~\ref{thm:JamesYoung}, 
$F \ind_{G}^{S_{2k+1}}$
also has a Specht filtration. These remarks deal with
all the remaining subgroups in case (b).

In cases (c) and (d), a considerable amount of work is done for us by
Theorem~2 of \cite{PagetBrauer}, which implies that
$F\ind_{S_2 \wr S_k}^{S_{2k}}$ has a Specht filtration. 
By
Theorem~\ref{thm:JamesYoung},
$F\ind_{S_2 \wr S_k}^{S_{2k+1}}$ also has a Specht filtration. 
The argument giving~\eqref{eq:collide} in~\S 2.4 shows that
\[ F_{H \rtimes S_k}\Ind^{S_{2k}} = 
F_{S_2 \wr S_k}\Ind^{S_{2k}} \oplus \bigl( F_{S_2 \wr S_k}\Ind^{S_{2k}} 
\bigr) \otimes \sgn_{S_{2k}}.\]
We may now apply 
Lemma~\ref{lemma:sgnfilt} and Proposition~\ref{prop:summands}
to deduce that the summands of the left-hand side have Specht filtrations.

It only remains to deal with the permutation modules
from case~(e). By our usual arguments, together with the remark following
Theorem~\ref{thm:JamesYoung}, it suffices to show
that for $k = 5$,~$6$ or~$9$, $F\ind_{G_k}^{S_k}$ has
a Specht filtration. This is immediate if $p > k$, as then
every $FS_k$-module is a direct sum of Specht modules.
The other cases turn out to be surprisingly easy.

If $p = 5$ and $k=5$ or $k=6$ then
the Specht modules corresponding to the non-trivial ordinary characters in
$1_{G_k}\ind^{S_k}$ are simple and projective. Hence
\[ F\Ind_{G_5}^{S_5} \cong S^{(5)} \oplus S^{(2,2,1)} \quad\text{and}\quad
F\Ind_{G_6}^{S_6} \cong S^{(6)} \oplus S^{(2,2,2)}.\]
When $p = 5$ and $k=9$ we observe that since $\PGammaL(2,8)$
does not contain any elements of order $5$,~$F\ind_{G_9}^{S_9}$ is projective.
It is well known (see \cite{JamesYoung}) that any projective module for
a symmetric group has a Specht filtration. The only
case left is when $p = 7$ and $k=9$.
One sees from~\eqref{eq:PGammaL} 
that each irreducible ordinary character appearing
in $1_{G_9}\ind^{S_9}$ lies in a different $7$-block of~$S_9$, and that
the only non-projective constituents are the trivial and sign
representations. Hence
\[ F\Ind_{G_9}^{S_9} \cong S^{(9)} \oplus S^{(1^9)} \oplus
S^{(4,4,1)} \oplus S^{(3,2,2,2)} \oplus S^{(5,1^4)} .\]
This completes the proof of Theorem~\ref{thm:mod}.

\section{Two counterexamples}

\subsection{} 
Let $F$ be an algebraically closed field of characteristic $3$.
We shall show
that the module  $M = F \ind_{\PGammaL(2,8)}^{S_9}$
is a counterexample to the conjecture that every permutation
module has a Specht filtration. 
It would be interesting to collect further examples
of permutation modules over fields
of odd characteristic which do not have 
Specht filtrations---at the moment, 
it is far from clear how common they are.

The shortest demonstration
that the author has been able to discover
hinges on the simple module $D^{(5,2,2)}$.
(See \cite[Definition~11.2]{James}
for the definition of the $D^\mu$.)
One can show, either with the help of computer algebra,
or more lengthily by \emph{ad hoc} arguments (see below), 
that $M$ contains a submodule isomorphic to
$D^{(5,2,2)}$. It follows from the table
of decomposition numbers of~$S_9$
in characteristic~$3$ (see \cite[p145]{James})
that there are only four Specht modules which (a)
have $D^{(5,2,2)}$ as a composition factor, and (b) do not also
have other composition factors that are absent in~$M$. They are
\[ S^{(5,2,2)}, S^{(3,2^3)}, S^{(5,1^4)}, S^{(3,3,1^3)}.\]
By a standard result (see \cite[Corollary~12.2]{James}),
$D^{(5,2,2)}$ only appears at the top of $S^{(5,2,2)}$.
By~\eqref{eq:modtwisted},
$D^{(5,2,2)}$ appears in the socle of  $S^{(3,2^3)}$ if and only if
$D^{(5,2,2)} \otimes \sgn = D^{(6,2,1)}$ appears in the top of
$S^{(4,4,1)}$; this is also ruled out by
\cite[Corollary~12.2]{James}. The same argument works
for $S^{(3,3,1^3)}$. Finally, one can use the long exact sequence
\[ S^{(9)} \rightarrow S^{(8,1)} \rightarrow
S^{(7,1^2)} \rightarrow S^{(6,1^3)} \rightarrow S^{(5,1^4)} \rightarrow \cdots\]
given by the maps $\theta_r$ constructed by Hamernik in \cite[p449]{Hamernik}
to show that~$S^{(5,1^4)}$ contains $D^{(5,2,2)}$ in its top,
but not in its socle. (Hamernik works only with symmetric groups
of prime degree, but it is easy to generalise this part of his work
to deal with hook-Specht modules for $FS_n$ whenever the
characteristic of~$F$ 
divides~$n$: see~\cite[\S 1.3]{WildonDPhil}.) 
Thus none of the candidate Specht modules contains $D^{(5,2,2)}$
as a submodule. The result follows.

It remains to show that $D^{(5,2,2)}$ appears in the socle
of $M$. For this we shall need the following lemma, which
is of some independent interest.

\begin{lemma}\label{lemma:endo}
Let $F$ be an algebraically closed field, let $G$
be a finite group and let $M$
be an indecomposable $FG$-module such
that 
\begin{thmlist}
\item $\soc M \cong \Top M \cong F$,
\item $F$
appears exactly twice as a composition factor of $M$.
\end{thmlist}
Then $\End_{FG}(M)$ is $2$-dimensional.
\end{lemma}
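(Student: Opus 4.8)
The plan is to analyze the endomorphism algebra $E = \End_{FG}(M)$ by exploiting the constraints imposed by conditions (i) and (ii). Since $M$ is indecomposable, $E$ is a local ring, so every endomorphism is either an automorphism or nilpotent. The automorphisms are the scalars (as $F$ is algebraically closed and $M$ is indecomposable, $E/\rad E \cong F$), so $E = F \cdot \mathrm{id} \oplus N$ where $N = \rad E$ consists of the non-invertible endomorphisms. Thus the task reduces to showing $\dim_F N = 1$.

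First I would show that any non-invertible $\phi \in E$ has image equal to the socle and kernel equal to the radical. Because $\Top M \cong F$ is simple, any $\phi$ that is not surjective must have image contained in $\rad M$; dually, since $\soc M \cong F$ is simple, any $\phi$ that is not injective must contain $\soc M$ in its kernel. For a non-invertible $\phi$, both hold, so $\phi$ factors as $M \twoheadrightarrow M/\rad M \cong F \to \soc M \hookrightarrow M$, where the middle map $F \to \soc M \cong F$ is multiplication by some scalar. This is where condition (ii) enters: the factorization makes sense precisely because the trivial module $F$ appears in both the top and the socle, and I must check that no non-invertible endomorphism can have larger image. Here the hypothesis that $F$ occurs exactly twice as a composition factor is essential — it prevents a non-invertible map from having image strictly between $\soc M$ and all of $M$ that still carries a copy of $F$ to the socle through some intermediate subquotient.

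The key step is therefore to verify that every non-invertible $\phi$ is determined by this single scalar, giving an injective linear map $N \to F$, whence $\dim_F N \le 1$. To see $N$ is nonzero (so $\dim_F N = 1$ exactly), I would construct an explicit nonzero nilpotent endomorphism: compose the quotient map $M \twoheadrightarrow \Top M \cong F$ with the inclusion $F \cong \soc M \hookrightarrow M$. This composite is nonzero because both maps are nonzero, and it is nilpotent because its image $\soc M$ lies in its kernel $\rad M$ (using that $F$ appears only twice, so the copy of $F$ in the socle is the same one seen in the top after composing once more). Combining the two bounds gives $\dim_F E = 1 + 1 = 2$.

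The main obstacle will be the careful bookkeeping in the factorization argument: I must rule out the possibility that a non-invertible endomorphism sends the top copy of $F$ to some copy of $F$ sitting properly above the socle, which would happen if $F$ appeared three or more times. This is exactly what condition (ii) forbids, and the heart of the proof is spelling out why exactly two composition factors isomorphic to $F$, together with the socle and top both being $F$, forces the image of any singular $\phi$ to be precisely $\soc M$. Once this rigidity is established, the dimension count is immediate.
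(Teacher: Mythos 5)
Your proposal is correct and follows essentially the same route as the paper: split $\End_{FG}(M)$ into scalars plus non-invertible maps (the paper uses Fitting's Lemma and an eigenvalue argument where you invoke locality of the endomorphism ring, which amounts to the same thing), and show every non-invertible endomorphism is a scalar multiple of $M \twoheadrightarrow \Top M \cong F \cong \soc M \hookrightarrow M$. The ``key step'' you defer is completed exactly as you anticipate: a non-zero non-invertible $\phi$ has image a proper non-zero submodule of $\rad M$ with simple top $F$ (being a quotient of $M$) and simple socle $F$ (being a submodule of $M$), so if $\im\phi \neq \soc M$ these two copies of $F$ together with $\Top M$ give three composition factors isomorphic to $F$, contradicting (b).
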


\begin{proof}
Let $\theta \in \End_{FG}(M)$.
By a corollary of Fitting's Lemma (see \cite[Lemma~1.4.5]{Benson}),
$\theta$ is either nilpotent or invertible. 
If $\theta$ is nilpotent then
$M / \ker \theta$ is isomorphic to a proper submodule of $M$;
by~(b) this can only happen if $\ker \theta = \rad M$ and~$\theta$ is,
up to a scalar, the map $\nu$ defined by
\[ M \twoheadrightarrow M/\rad M  \cong F \hookrightarrow M. \]
If $\theta$ is invertible, then it has a
non-zero eigenvalue $t \in F$. Since $\theta - t1_M$ is not
invertible, it must be a scalar multiple of $\nu$.
Hence $\End_{FG}(M) = \left<1_M, \nu\right>_F$. 
\end{proof}

\begin{proposition}\label{lemma:522} 
Let $F$ be an algebraically closed field of characteristic $3$.
The simple module $D^{(5,2,2)}$ 
lies in the socle of $F \ind_{\PGammaL(2,8)}^{S_9}$. 
\end{proposition}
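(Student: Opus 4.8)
\textbf{The plan} is to show that the permutation module $M = F\!\ind_{\PGammaL(2,8)}^{S_9}$ contains $D^{(5,2,2)}$ in its socle by producing a large indecomposable summand of $M$ and applying Lemma~\ref{lemma:endo}. The module $M$ is self-dual (by Lemma~\ref{lemma:selfdual}), so it will suffice to exhibit $D^{(5,2,2)}$ in its \emph{top}, or equivalently to show that $\Hom_{FS_9}(M, \text{[a module with $D^{(5,2,2)}$ head]})$ is non-zero; self-duality then transports the copy from top to socle. Since $\PGammaL(2,8) \le A_9$, the module $M$ is fixed by twisting with $\sgn$, so it will be enough to locate $D^{(5,2,2)}$ once and use \eqref{eq:modtwisted} freely.

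First I would use the ordinary decomposition~\eqref{eq:PGammaL}, which gives the Specht factors $\chi^{(9)}, \chi^{(1^9)}, \chi^{(5,1^4)}, \chi^{(4,4,1)}, \chi^{(3,2^3)}$, together with the mod~$3$ decomposition matrix of $S_9$ from \cite[p145]{James} to read off the full list of composition factors of $M$ with their multiplicities. The key arithmetic observation I expect to need is that among these composition factors the \emph{trivial module} $F = D^{(9)}$ occurs exactly twice (once from the bottom of $S^{(9)}$ and once from $S^{(1^9)}$ or a hook Specht module), and that $D^{(5,2,2)}$ occurs with a controlled multiplicity. Next I would isolate the indecomposable summand $U$ of $M$ containing the trivial module. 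Because $M$ is a permutation module on cosets of a subgroup, the fixed points $M^{S_9}$ and the image of the trace/norm map are one-dimensional, which forces the projective-cover-style structure $\soc U \cong \Top U \cong F$ with $F$ appearing exactly twice as a composition factor of $U$ --- precisely the hypotheses of Lemma~\ref{lemma:endo}.

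Having placed $U$ in the scope of Lemma~\ref{lemma:endo}, I conclude $\dim \End_{FS_9}(U) = 2$, and hence that $U$ is a \emph{uniserial-in-the-trivial} block summand whose unique proper nonzero submodule heart $\rad U / \soc U$ is a single self-dual module stacked between the two trivials. The plan is then to identify this heart: I would compute the Specht filtration multiplicities forced by \eqref{eq:PGammaL} and match them against the list of simple modules in the principal $3$-block of $S_9$, using \cite[Corollary~12.2]{James} to control which $D^\mu$ can sit at the top or socle of each relevant Specht module. Pinning down that $D^{(5,2,2)}$ (equivalently, via \eqref{eq:modtwisted}, its conjugate) must appear adjacent to the trivial in this heart is the decisive structural step; once it is adjacent to $\soc U \cong F$ inside the self-dual summand $U$, self-duality of $U$ forces a copy of $D^{(5,2,2)}$ into $\soc M$ as required.

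The hard part will be the last step: controlling the internal Loewy structure of the summand $U$ tightly enough to be sure $D^{(5,2,2)}$ lands in the socle rather than merely somewhere in the radical series. Lemma~\ref{lemma:endo} gives only that $\End$ is two-dimensional, which pins down the top and socle as trivial but does not \emph{a priori} tell us which simple sits immediately next to the socle. I expect to close this gap by a submodule-lattice argument combining self-duality with the hook exact sequence of Hamernik (the maps $\theta_r$ referenced in~\S4.1, applied to $S^{(5,1^4)}$ in characteristic dividing $n=9$) to show that the only way the composition factors can be arranged consistently with $\dim \End_{FS_9}(U)=2$ is with $D^{(5,2,2)}$ abutting the socle. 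Equivalently, I might sidestep the lattice analysis by directly exhibiting a non-split extension $0 \to F \to ? \to D^{(5,2,2)} \to 0$ inside $U$ forced by an $\Ext^1$ computation in the principal block; either route delivers the claim.
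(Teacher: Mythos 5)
There is a genuine gap, and it lies at the heart of your plan: the application of Lemma~\ref{lemma:endo}. You claim that the one-dimensionality of $M^{S_9}$ ``forces'' the summand $U$ containing the trivial module to satisfy $\soc U \cong \Top U \cong F$. It does not. Frobenius reciprocity gives $\dim \Hom_{FS_9}(F, M) = 1$, which says only that $F$ occurs with multiplicity one in $\soc M$; it says nothing about whether the socle of the indecomposable summand containing that copy of $F$ is \emph{simple}. In fact the proposition you are trying to prove is precisely the assertion that this socle is not simple: $M$ turns out to be indecomposable with $\soc M \cong F \oplus \sgn \oplus D^{(6,2,1)} \oplus D^{(5,2,2)}$, so the hypotheses of Lemma~\ref{lemma:endo} fail for $U = M$ and your structural conclusions about a ``heart'' sandwiched between two trivials collapse. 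Worse, even if you could establish your claimed structure for $U$ and show that $D^{(5,2,2)}$ sits in $\rad U / \soc U$ adjacent to the socle, that would place $D^{(5,2,2)}$ in the \emph{second} socle layer of $U$, not in $\soc M = \bigoplus_i \soc U_i$; self-duality of $U$ cannot move a composition factor from the heart into the socle. So the final step does not deliver the claim even on your own terms.

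The paper uses Lemma~\ref{lemma:endo} in the opposite, contrapositive direction, and this is the idea you are missing. One first restricts to $A_9$ (where the ordinary character has only three constituents, so $\End_{FA_9}(N)$ is exactly $3$-dimensional and $N$ has only six composition factors), then uses a Mackey/Sylow argument --- restricting $N$ to a Sylow $3$-subgroup $P$ and noting $|P : \PGammaL^g \cap P|$ is divisible by $3$ --- to show every indecomposable summand of $N$ has dimension divisible by $3$, which rules out $D^{(8,1)}\res_{A_9}$ and $D^{(6,3)}\res_{A_9}$ (dimensions $7$ and $41$) as summands and hence confines them to the middle Loewy layer. \emph{Now assume} $D^{(5,2,2)}\res_{A_9}$ is not in the socle: then $\soc N$ can only consist of trivial modules, so by $\dim \Hom(F,N)=1$ the module $N$ is indecomposable with $\soc N \cong \Top N \cong F$ and $F$ occurring twice, whence Lemma~\ref{lemma:endo} gives $\dim \End_{FA_9}(N) = 2$, contradicting the $3$-dimensionality. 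Your proposal contains neither the endomorphism-algebra dimension count for the full module nor the divisibility argument needed to exclude the other simples from the socle, and without some substitute for these the Ext-computation or Loewy-lattice analysis you defer to at the end has no clear route to completion.
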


\noindent\emph{Proof}. By basic Clifford theory,
it is equivalent to show that $D^{(5,2,2)}\res_{A_9}$
lies in the socle of $N = F \ind_{\PGammaL(2,8)}^{A_9}$; for an introduction
to the Clifford theory needed to relate representations of 
the alternating and symmetric groups, see \cite[Chapter~5]{FH}.
It follows from \eqref{eq:PGammaL} in \S 2.3 that
\[ 1_{\PGammaL(2,8)}\Ind^{A_9} = 1 + 
{\chi^{(5,1^4)}}^+ 
+ \chi^{(4,4,1)}\Res_{A_9}\]
where ${\chi^{(5,1^4)}}^+$ is one of the two irreducible constituents of 
$\chi^{(5,1^4)}\res_{A_9}$. (The labelling of this pair of characters
is essentially arbitrary, and we do not need to be any more precise here.) 
From this, one can use decomposition 
numbers of $S_9$ to show that the composition factors of
$N$ 
are
\[ F, F, D^{(8,1)}\Res_{A_9}, D^{(6,3)}\Res_{A_9},
D^{(5,2,2)}\Res_{A_9}, D^{(5,2,2)}\Res_{A_9} .\]

Let $Q$ be a Sylow $3$-subgroup of $\PGammaL(2,8)$ and let $P$
be a Sylow $3$-subgroup of $A_9$ containing $Q$. Note that $|P : Q| = 3$.
It follows from Mackey's lemma that
\begin{equation}\label{eq:dec} 
N\Res_P = \bigoplus_g
F\Ind_{\PGammaL^g \cap P}^{P}
\end{equation}
where $g$ runs over a set of representatives for the double cosets
of $\PGammaL(2,8)$ and $P$ in $A_9$. If $R$ is any subgroup of
$P$ then, by Frobenius reciprocity
\[ \dim \Hom_{FP}(F\ind_R^{P}, F)
= \dim \Hom_{FR}(F,F) = 1. \]
Hence each summand of the right-hand side of \eqref{eq:dec} is
indecomposable with a $1$-dimensional socle. The dimension
of $F\ind_{\PGammaL^g \cap P}^{P}$ is
$|P : \PGammaL^g \cap P|$ which is divisible by $|P : Q| = 3$.
It follows that if $U$ is any indecomposable
summand of $N$ (considered as a $FA_9$-module) then
$U$ has dimension divisible by $3$.

Since the ordinary character $1_{\PGammaL(2,8)}\ind^{A_9}$
is multiplicity-free with three irreducible constituents, the 
$\mathbf{Q}A_9$-permutation module $\mathbf{Q}\ind_{\PGammaL(2,8)}^{A_9}$
has a $3$-dimensional endomorphism algebra.
By \cite[Theorem~3.11.3]{Benson}, the endomorphism algebra
$\End_{FA_9} (N)$
is also $3$-dimensional. 

The $7$-dimensional
simple module $D^{(8,1)}\res_{A_9}$ and the $41$-dimensional
simple module $D^{(6,3)}$ cannot appear as summands of $N$.
Since they each appear but once as compositions factors
of~$N$, they must lie in its middle
Loewy layer. If, in addition, $D^{(5,2,2)}\res_{A_9}$ does not
appear in the socle of $N$, then~$N$ must be indecomposable,
with top and socle both isomorphic to the trivial module~$F$.
But then Lemma~\ref{lemma:endo} implies that $\End_{FA_9}(N)$
is $2$-dimensional, a contradiction. Hence $\soc N$
contains $D^{(5,2,2)}\res_{A_9}$.
\quad$\Box$

\medskip
\noindent \emph{Remark:}
A small extension of this argument shows that 
$M = F\ind_{\PGammaL(2,8)}^{S_9}$ 
is indecomposable, with the Loewy layers
shown below. 
\[ \begin{matrix} F \oplus \sgn \oplus D^{(6,2,1)}
\oplus D^{(5,2,2)} \\
D^{(8,1)} \oplus D^{(4,4,1)} \oplus D^{(6,3)} \oplus D^{(3,3,2,1)}
\\
F \oplus \sgn \oplus D^{(6,2,1)}
\oplus D^{(5,2,2)} \end{matrix}
\]

\subsection{}
We now consider $\mathbf{F}_2 \ind_{S_3 \wr S_2}^{S_6}$.
It is easy to show that this module has the Loewy
layers
\[ 
\begin{array}{c} \mathbf{F}_2 \\ D^{(5,1)} \oplus D^{(4,2)} \\ \mathbf{F}_2
\end{array}.
\]

By \eqref{eq:twowreath} in \S2.2, the ordinary character associated to
 $\mathbf{F}_2 \ind_{S_3 \wr S_2}^{S_6}$ 
is $\chi^{(6)} + \chi^{(4,2)}$.
It is known that although the trivial module is a 
composition factor of~$S^{(4,2)}$, it does not appear
in either the top or socle of $S^{(4,2)}$ (see 
\cite[Example~24.5(iii)]{James}).
It follows that there is no
Specht filtration of $\mathbf{F}_2 \ind_{S_3 \wr S_2}^{S_6}$ 
with
the factors $S^{(6)}$ and $S^{(4,2)}$. 
One can, however, exploit the outer automorphism of $S_6$, which
sends the Specht module $S^{(5,1)}$ to 
$S^{(3,3)} = {S^{(2,2,2)}}^\star$ and leaves~$\mathbf{F}_2\ind_{S_3 \wr S_2}^{S_6}$ fixed,
to show that there is a short exact sequence
\[ 0 \rightarrow S^{(5,1)} \rightarrow 
\mathbf{F}_2 \ind_{S_3 \wr S_2}^{S_6} \rightarrow
S^{(2^3)} \rightarrow 0 .\]
Thus $\mathbf{F}_2 \ind_{S_3 \wr S_2}^{S_6}$
has a Specht filtration, but the Specht factors 
required are not those indicated by
the associated ordinary character. It is left to the
reader to formulate any of the many conjectures to
which this module is a counterexample.


%

\section*{Acknowledgements}
I should like to thank Michael Collins
for asking me the question that lead to the work of \S 2,
and an anonymous referee for his or her
very helpful comments on earlier versions of this paper.

\def\cprime{$'$} \def\Dbar{\leavevmode\lower.6ex\hbox to 0pt{\hskip-.23ex
  \accent"16\hss}D}

\end{document}